\setlist[enumerate]{label=\arabic*.}
\theoremstyle{definition}
\newtheorem{defin}{Definition}[section]
\newtheorem{rem}[defin]{Remark}
\newtheorem{exmp}[defin]{Example}
\theoremstyle{plain}
\newtheorem{theor}[defin]{Theorem}
\newtheorem{prop}[defin]{Proposition}
\newtheorem{lem}[defin]{Lemma}
\newtheorem{corol}[defin]{Corollary}
\crefname{defin}{Definition}{Definitions}
\crefname{rem}{Remark}{Remarks}
\crefname{theor}{Theorem}{Theorems}
\crefname{prop}{Proposition}{Propositions}
\crefname{lem}{Lemma}{Lemmas}
\crefname{corol}{Corollary}{Corollaries}
\crefname{claim}{Claim}{Claims}
\crefname{exmp}{Example}{Examples}
\newcommand{\blank}[1][1]{%
\relax\ifcase#1\relax\or{(\rule[.4ex]{1.5ex}{.4pt})}
\else{\rule[.4ex]{1.5ex}{.4pt}}\fi
}
\newcommand{\abscatm}[1]{\mathbb{#1}}
\newcommand{\catcm}{\abscatm{C}}
\newcommand{\catct}{$\catcm$\xspace}
\newcommand{\catem}{\abscatm{E}}
\newcommand{\catet}{$\catem$\xspace}
\newcommand{\conccatm}[1]{\mathsf{#1}}
\newcommand{\catsetm}{\conccatm{{Set}}}
\newcommand{\catsett}{$\catsetm$\xspace}
\newcommand{\cattopm}{\conccatm{Top}}
\newcommand{\cattopt}{$\cattopm$\xspace}
\newcommand{\indxup}[2]{#1_{\textup{#2}}}
\newcommand{\excomm}[1]{\indxup{#1}{ex}}
\newcommand{\excomcm}{\excomm{\catcm}}
\newcommand{\excomct}{$\excomcm$\xspace}
\newcommand{\posrefm}[1]{\indxup{#1}{po}}
\newcommand{\posrefcm}{\posrefm{\catcm}}
\DeclareMathOperator{\hocatm}{Ho}
\newcommand{\caucomm}[1]{\overline{#1}}
\newcommand{\caucomcm}{\caucomm{\catcm}}
\newcommand{\caucomct}{$\caucomcm$\xspace}
\newcommand{\posrefcslm}[1]{\posrefm{(\catcm/#1)}}
\newcommand{\pjK}{\mathcal{K}}
\newcommand{\GsetKlm}{\catsetm_G}
\newcommand{\GsetKlt}{$\GsetKlm$\xspace}
\newcommand{\GsetEMm}{\catsetm^G}
\newcommand{\GsetEMt}{$\GsetEMm$\xspace}
\newcommand{\xykl}{\ar@{-)}}
\newcommand{\xyklpsrel}[2][]{\xykl@<1ex>[#2]#1 \xykl@<-.3ex>[#2]}
\newcommand{\tokl}{\!\xycenterm[C=1em]{\xykl[r]&}\!}
\newcommand{\fn}[1]{{#1}_1}
\newcommand{\gr}[1]{{#1}_2}
\newcommand{\termm}{\mathbf{1}}
\newcommand{\termt}{$\termm$\xspace}
\newcommand{\proj}{\textup{pr}}
\newcommand{\id}{\textup{id}}
\newcommand{\pbkun}[1]{\langle #1 \rangle}
\newcommand{\prodfnct}{(\blank[2]) \times}
\newcommand{\wprod}[1]{\times^{\textup{w}}_{#1}}
\newcommand{\wspfm}{\mathbf{w}}
\newcommand{\wspfxm}{\wspfm_X}
\newcommand{\wspfxt}{$ \wspfxm $\xspace}
\newcommand{\subm}{\textup{Sub}}
\newcommand{\exfnctm}{\Gamma}
\newcommand{\exfnctt}{$\exfnctm$\xspace}
\newcommand{\prjcm}{\mathcal{P}}
\newcommand{\prjct}{$\prjcm$\xspace}
\newcommand{\xyar}[4]{\ar@{#1}#2|>{\SelectTips{#3}{}\object@{#4}}} % |<{#2}|-{#3}
\newcommand{\toop}{\leftarrow}
\newcommand{\xys}[1]{\xyar{-}{#1}{cm}{>}}
\newcommand{\xydot}[1]{\xyar{.}{#1}{cm}{>}}
\newcommand{\xyblank}[1]{\xyar{}{#1}{cm}{}}
\newcommand{\xymapsto}[1]{\xyar{-}{|<{\object@{|}}#1}{cm}{>}}
\newcommand{\mono}{\hookrightarrow}
\newcommand{\xymonor}[1]{\xyar{^{(}-}{#1}{cm}{>}@<-1pt>}
\newcommand{\xymonol}[1]{\xyar{_{(}-}{#1}{cm}{>}@<1pt>}
\newcommand{\repi}{\, \mbox{\large$\rightarrowtriangle$}\, }
\newcommand{\xyrepi}[1]{\xyar{-}{#1}{xy}{|>}}
\newcommand{\psrel}{\rightrightarrows}
\newcommand{\xypsrel}[2][]{\xyar{-}{}{cm}{>}@<.8ex>[#2]#1 \xyar{-}{}{cm}{>}@<-.4ex>[#2]}
\newcommand{\ftr}[1][]{\overset{#1}{\longrightarrow}}
\newcommand{\ftrmon}[1][]{\xymatrix@=1.5em{\xymonor{}[r]^-{#1}&}}
\newcommand{\xycenterm}[2][=2em]{\vcenter{\hbox{\xymatrix@#1{#2}}}}
\newcommand{\adj}{\dashv}
\newcommand{\adjrot}[1]{\rotatebox[origin=c]{#1}{$\adj$}}
\newcommand{\allprje}{\textup{P}(\catem)}
\newcommand{\ie}{{\itshape i.e.}\xspace}
\renewcommand{\iff}{if and only if\xspace}
\newcommand{\wrt}{with respect to\xspace}
\newcommand{\mltt}{Martin-L\"of type theory\xspace}
\newcommand{\pcover}{\catpt-cover\xspace}
\newcommand{\pcovers}{\catpt-covers\xspace}
\renewcommand{\pcover}{\prjct-cover\xspace}
\renewcommand{\pcovers}{\prjct-covers\xspace}
\newcommand{\pquot}{\prjct-quotient\xspace}
\newcommand{\pquots}{\prjct-quotients\xspace}
\newcommand{\pker}{\prjct-kernel\xspace}
\title{On the local cartesian closure of exact completions}
\author{Jacopo Emmenegger}
\email{\href{mailto:op.emmen@gmail.com}{op.emmen@gmail.com}}
\address{%
School of Computer Science, University of Birmingham, Birmingham B15 2TT, UK
}
\thanks{%
This work is licensed under CC-BY-NC-ND 4.0. \url{http://creativecommons.org/licenses/by-nc-nd/4.0/}\xspace
}
\subjclass[2020]{18D15; 18E08; 18A35; 18B15; 18E20}
\keywords{Exact completion, local cartesian closure, weak limits, internal projective objects.}
\begin{document}

\begin{abstract}
This paper presents a necessary and sufficient condition on a category \catct with weak finite limits
for its exact completion \excomct to be (locally) cartesian closed.
A paper by Carboni and Rosolini already claimed such a characterisation
using a different property on \catct,
but we shall show that weak finite limits are not enough for their proof to go through.
We shall also indicate how to strengthen the hypothesis for that proof to work.
It will become clear that,
in the case of ex/lex completions, their characterisation is still valid
and it coincides with the one presented here.
\end{abstract}

\maketitle

This paper presents a condition on a category \catct with weak finite limits
which is equivalent to the (local) cartesian closure of its exact completion \excomct.
In the next section we recall some background notions and results,
besides fixing notation.
\Cref{sec:cc} introduces the concepts needed to formulate the condition on \catct
and proves the characterisation of cartesian closure in \cref{thm:ccex}.
\Cref{sec:lcc} does the same for local cartesian closure proving \cref{thm:lccex}.
In \cref{sec:carochar} we discuss the differences between
our condition and the one given by Carboni and Rosolini in~\cite{CarboniRosolini2000},
and show that the characterisation of cartesian closure
based on the latter requires an additional assumption.
\Cref{sec:concl} contains some concluding remarks.

%--------------------------------------------------------------------------------------------------------------------------------
\section{Preliminaries} \label{sec:preliminaries}

\subsection{Exact categories and projective covers}
A category \catet is \emph{exact}~\cite{Barr1971}
if it has finite limits, regular epis are stable under pullback
and every equivalence relation $ r = (r_1,r_2) \colon R \mono X \times X $ fits in a diagram
\begin{equation} \label{eq:exdiag}
\xycenterm{
  R	\xypsrel[^{r_1}]{r}_{r_2}	&	X	\xyrepi{}[r]^q	&	Q	}
\end{equation}
which is exact,
\ie $ q $ is a coequaliser of $ r_1,r_2 $ and $ r_1,r_2 $ is a kernel pair of $ q $.
When we know an arrow to be a regular epi,
we shall write it with a triangle head as for $ q $ above.
For an introduction to exact categories,
see \cite[Ch.\ 2]{Borceux1994II} and \cite[Ch.\ A1]{Johnstone2002}.

A \emph{quasi limit} of a finite diagram in an exact category \catet
is a cone over that diagram such
that the unique arrow into the limit cone is a regular epi.
Similarly, a diagram as~\eqref{eq:exdiag} is \emph{quasi exact}
if $ q r_1 = q r_2 $ and the unique arrow into the kernel of $ q $ is a regular epi.
In particular, a quasi exact diagram is a coequaliser.

\begin{defin} \label{def:covsq}
Let \catet be an exact category.
A \emph{covering square} in \catet is a quasi pullback
\begin{equation} \label{eq:covsq}
\xycenterm{
X	\xys{}[d]_-{\hat{f}} \xyrepi{}[r]^-p
&	A	\xys{}[d]^-f
\\
Y	\xyrepi{}[r]^-q	&	B	}
\end{equation}
where $ q $ (and so $ p $) is a regular epi.
\end{defin}

An object $ X $ \emph{covers} another object $ A $ if there is a regular epi $ X \repi A $,
and we shall refer to regular epis also as covers.
An arrow
\emph{$ \hat{f} \colon X \to Y $ covers $ f \colon A \to B $ via $p$ and $q$}
if they fit in a covering square as~\eqref{eq:covsq}.
We shall just say that $ \hat{f} \colon X \to Y $ covers $ f \colon A \to B $
when the covers $p$ and $q$ are made clear by the context.

Covering squares enjoy the Beck-Chevalley property for subobjects.

\begin{lem}\label{lem:covsqBC}
Let \catet be an exact category.
For any covering square as in \eqref{eq:covsq},
the canonical natural transformation
$ \exists_p \, \hat{f}^* \ftr[\text{\textbf{.}}] f^* \, \exists_q : \subm(Y) \ftr \subm(A) $
is invertible.
\end{lem}

An object $ X $ in a category \catet is called \emph{(regular) projective}
if, for every regular epi $ g \colon A \repi B $ and arrow $ f \colon X \to B $,
there is a \emph{lift} of $ f $ along $ g $,
\ie an arrow $ f' \colon X \to A $ such that $ g f' = f $.

\begin{defin}
\label{def:projcover}
A \emph{projective cover} of an exact category \catet is a full subcategory
$ \prjcm \colon \catcm \ftrmon \catem $ such that
\begin{enumerate}[label=\alph*)]
\item
for every object $ X $ in \catct, $ \prjcm X $ is projective in \catet, and
\item
every object in \catet is covered by an object in \catct,
\ie for every $ A $ in \catet there are $ X $ in \catct
and a regular epi $ \prjcm X \repi A $.
\end{enumerate}
\catet has \emph{enough projectives} if it has a projective cover.
\end{defin}

When an exact category \catet has a projective cover \prjct,
we shall refer to a regular epi $ \prjcm X \repi A $ as a \emph{\pcover of $ A $}.
Every arrow $ f \colon A \to B $ in \catet can be covered by $ \prjcm \hat{f} $
for some $ \hat{f} \colon X \to Y $ in \catct,
just take a \pcover of a pullback of $ f $ and a \pcover of $ Y $.
Abusing terminology, we shall often refer to $ X $ and $ \hat{f} $
as \pcovers of $ A $ and $ f $, respectively.

\begin{exmp}\label{exmp:gset-pjcov}
Every category \catet monadic over the category \catsett of sets is exact with enough projectives
and the full subcategory on the free algebras
is a projective cover~\cite[Theorem 4.3.5]{Borceux1994II}.
This subcategory is equivalent to the Kleisli category
via the comparison functor $\pjK$.
The counit of the free-forgetful adjunction between \catet and \catsett
provides a choice of $\pjK$-covers of objects,
and postcomposition with the unit yields a choice of $\pjK$-covers of arrows.

Consider in particular the topos \GsetEMt of $G$-sets
for a group $G = (G,\blank[2]\cdot\blank[2],1_G,\blank^{-1})$.
Recall that objects of the Kleisli category \GsetKlt are sets,
and arrows $f \colon A \tokl B$ in \GsetKlt
are functions $f = \pbkun{\fn{f},\gr{f}} \colon A \to B \times G$ in \catsett.
The identity on $A$ is the unit $\eta_A$ of the free-forgetful adjunction
and the composite $gf$ of two arrows
$ f\colon A \tokl B $ and $g \colon B \tokl C $
in \GsetKlt is given by the function $\mu_C(g \times G)f$,
where $\mu_C$ is the free action of $G$ on $C \times G$.
The comparison functor $\pjK \colon \GsetKlm \to \GsetEMm$
maps an arrow $ f \colon A \tokl B $
to the morphism of free actions
$\mu_B (f \times G) \colon (A \times G, \mu_A) \to (B \times G, \mu_B).$
For a morphism $g \colon (A,\alpha) \to (B,\beta)$,
the choice of $\pjK$-covers given by the free-forgetful adjunction
is exhibited by the square
\begin{equation}\label{gset-covsq}
\xycenterm[=2.5em]{
(A \times G, \mu_A)	\xys{}[d]_-{g \times G} \xyrepi{}[r]^-{\alpha}
&	(A,\alpha)	\xys{}[d]^-g
\\
(B \times G, \mu_B)	\xyrepi{}[r]^-{\beta}
&	(B,\beta),
}
\end{equation}
where the horizontal arrows are counit components
and $g \times G = \pjK(\eta_B g)$.
In this case the square \eqref{gset-covsq} is not just covering,
but an actual pullback.
\end{exmp}

Let $ \prjcm \colon \catcm \ftrmon \catem $ be a projective cover.
The poset of subobjects in \catet of an object $ \prjcm X $
is isomorphic to the poset reflection $\posrefm{(\catcm/X)} $ of $ \catcm/X $.
More generally, for a tuple $ X_1, \dots , X_n $ of objects in \catct,
let $ \catcm/(X_1, \dots, X_n) $ denote the category of cones over $ X_1, \dots , X_n $,
\ie the comma category $ \Delta \downarrow (X_1, \dots, X_n) $,
where $ \Delta \colon \catcm \ftrmon \catcm^n $ is the diagonal functor and $ (X_1, \dots, X_n) \colon \mathbbm{1} \to \catcm^n $.
Hence, for every $ n > 0 $ and every tuple $ X_1, \dots , X_n $ of objects in \catct,
\begin{equation}\label{eq:subobj}
\subm_{\catem}(\termm) \cong \posrefcm	\qquad\text{and} \qquad
\subm_{\catem}(\prjcm X_1 \times \dots \times \prjcm X_n) \cong%
\posrefcslm{(X_1, \dots, X_n)}, \text{ for } n \geq 1.
\end{equation}
For this and other properties of projective covers that we shall need,
we refer to~\cite{CarboniVitale1998} and \cite{Vitale1994}.

The following proposition is a useful characterisation
of the full subcategory of projectives among all projective covers of
an exact category.

\begin{prop}[\cite{CarboniVitale1998}]\label{prop:maxpjc}
Let $ \prjcm \colon \catcm \ftrmon \catem $
be a projective cover of an exact category \catet.
The following are equivalent.
\begin{enumerate}
\item	Idempotents split in \catct.
\item	The category $ \catcm $ is closed under retracts in \catet.
\item	The category $ \catcm $ is the full subcategory $\allprje$
	of \catet on the projective objects.
\end{enumerate}
\end{prop}

\subsection{Weak finite limits}
Weak limits are defined as usual limits
but without requiring uniqueness of the induced arrow.
More precisely, an object is weakly terminal if every object has an arrow into it,
and a \emph{weak limit} of a diagram $ D $ is a cone over $ D $
which is weakly terminal among cones over $ D $.
For example, a weak product of $ X $ and $ Y $ is a span $ X \toop W \to Y $ such that,
for any span $ X \toop Z \to Y $, there is a (not necessarily unique) arrow $ Z \to W $
making the two triangles commute.
A weak limit is a limit \iff its projections are jointly monic.
Below we list some examples of weak limits.

\begin{exmp} \label{exmp:wlim} \hspace{1em}
\begin{enumerate}[ref=\ref{exmp:wlim}.\arabic*]
\item \label[exmp]{exmp:wlim:set}
Consider the diagram $ X \toop X \times 2 \times Y \to Y $ in \catsett
where $X$ and $Y$ are inhabited sets, $ 2 $ is a set with two elements
and the two functions are the product projections.
It is clearly a weak product in \catsett with non-jointly monic projections.
\item \label[exmp]{gset-wlim}
The Kleisli category \GsetKlt
from \cref{exmp:gset-pjcov} has weak finite limits.
A weak product of two objects $A$ and $B$ in \GsetKlt is
\[
\xycenterm[C=4em]{
A	&
A \times G \times B \times G	\xykl[l]_-{\proj_{1,2}} \xykl[r]^-{\proj_{3,4}}
&	B	}
\]
where $\proj_{i,j} \coloneqq \langle \proj_i , \proj_j \rangle$.
Since postcomposition in \GsetKlt with $\proj_{1,2}$ and $\proj_{3,4}$
involves a free action on $A$ and $B$, respectively,
if the group $G$ is non-trivial
the projections $\proj_{1,2}$ and $\proj_{3,4}$ are not jointly monic in \GsetKlt.
\item \label[exmp]{exmp:wlim:hty}
Homotopy pullbacks in \cattopt, the category of spaces and continuous functions, become weak pullbacks
when mapped in the category $ \hocatm(\cattopm) $ of topological spaces and homotopy classes of continuous maps.
In particular, the weak kernel pair in $ \hocatm(\cattopm) $
of the homotopy class of the universal cover of the circle
$ f \colon \mathbb{R} \to S^1 $
can be computed as the homotopy pullback
\[
\mathbb{R} \times^{\textup{h}}_{S^1} \mathbb{R} \coloneqq%
\Set{ (x,y,h) \in \mathbb{R} \times \mathbb{R} \times (S^1)^{[0,1]} |%
h(0) = f(x) \text{ and } h(1) = f(y) }
\]
together with the two projections onto $\mathbb{R}$.
Since loops in $S^1$ with different winding numbers are not homotopic,
the two projections cannot be jointly monic in $ \hocatm(\cattopm) $.
\end{enumerate}
\end{exmp}

Example 2 above is an instance of the following fact.

\begin{lem}\label{lem:wlim-pjcov}
Let $ \prjcm \colon \catcm \ftrmon \catem $ be a projective cover
of an exact category \catet.
Let $\mathcal{D} \colon \abscatm{X} \ftr \catcm$ be a finite diagram in \catct,
then a cone $ (p_x \colon W \to \mathcal{D}x)_{x \in \abscatm{X}}$
over $\mathcal{D}$ is a weak limit in \catct \iff
$(\prjcm p_x \colon \prjcm W \to \prjcm \mathcal{D}x)_{x \in \abscatm{X}}$
is a quasi limit of $\prjcm \mathcal{D}$ in \catet,
\ie the unique arrow $p \colon \prjcm W \to \textup{lim}(\prjcm \mathcal{D})$
over $\prjcm \mathcal{D}$ is a regular epi.
In particular, \catct has finite weak limits.
\end{lem}

\begin{proof}
If $(\prjcm p_x)_x$ is a quasi limit,
we can lift along $p$ a universal arrow into the limit
as in the proof of Proposition 4 in~\cite{CarboniVitale1998}.
The converse is straightforward.
\end{proof}

We stated the lemma for finite diagrams,
but the finiteness assumption is actually irrelevant
as soon as the limit exists in \catet.

\begin{rem}\label{rem:wlim-pjcov}
\Cref{lem:wlim-pjcov} amounts to say that a projective cover
maps weak limits into quasi limits,
and reflects quasi limits into weak limits.
A functor $ \catcm \ftr \catem $ from a category with weak finite limits
into an exact category mapping weak limits into quasi limits
is called \emph{left covering}~\cite{CarboniVitale1998,Vitale1994}.
Left covering functors $ \catcm \ftr \catem $ preserve jointly monomorphic families
and thus all finite limits that happen to exist in \catct.
However, a projective cover
(and so a left covering functor) need not preserve weak finite limits.
\end{rem}

Let \catct be a category with weak finite limits.
Weak pullbacks along an arrow $ f \colon V \to X $ in \catct define a functor
$ f^{*\textup{w}} \colon \posrefcslm{X} \to \posrefcslm{V} $.
In particular,
when $ f $ is a projection of a weak product $ Z \toop V \to X $,
we denote the functor defined by weak pullback along $ f $ as
\[
\wprod{X} \colon \posrefcslm{Z} \longrightarrow \posrefcslm{(Z,X)}
\]
and call it the \emph{weak product functor}.
In the case of a projective cover $ \prjcm : \catcm \ftrmon \catem $,
the weak product functor $ \wprod{X} $ is isomorphic
to the product functor $ \prodfnct \prjcm X \colon%
\subm_{\catem}(\prjcm Z) \ftr \subm_{\catem}(\prjcm Z \times \prjcm X) $
via the isomorphisms in \eqref{eq:subobj}.

\subsection{Cartesian closed exact categories}

Let \catet be an exact category.
If it is cartesian closed, then not only it has exponentials
but all simple products, that is, right adjoints $ \Pi_A $
to pullback along product projections. %, \ie the functors
The simple product functor $ \Pi_A $ restricts to subobjects,
endowing the internal logic of \catet with universal quantification $ \forall_A \colon \subm(I \times A) \to \subm(I) $.

\begin{rem}\label{rem:univq}
When \catet is cartesian closed and
has a projective cover $ \prjcm \colon \catcm \ftrmon \catem $,
it follows by~\eqref{eq:subobj} that,
for every $Z,X \in \catcm$,
the weak product functor $ \wprod{X} $ has a right adjoint
$ \forall_X^w \colon \posrefcslm{(Z,X)} \to \posrefcslm{Z} $.
As it was already observed in~\cite{CarboniRosolini2000},
the converse is true as well.
Indeed,
suppose that $ \wprod{X} $ is left adjoint for every $X$ and $Z$ in \catct,
then by \eqref{eq:subobj}
there is $\forall_{\prjcm X}$ right adjoint to $\prodfnct \prjcm X$.
Let $I$ and $A$ be objects in \catet and take \pcovers
$ p \colon \prjcm X \repi A $ and $ q \colon \prjcm Z \repi I $,
which result in a covering square exhibiting
$\proj_1 \colon \prjcm Z \times \prjcm X \to \prjcm Z$ as a cover of
$\proj_1 \colon I \times A \to I$.
It follows that the square
of left adjoints below commutes by \cref{lem:covsqBC}.
To obtain a right adjoint $\forall_A$ to $\prodfnct A$,
we can thus apply Theorem 2 in Section 3.7 of~\cite{BarrWells1985}
to the diagram below.
\[
\xycenterm[=3em]{
\subm(I)	\xys{@<1ex>}[d]^{q^*} \xyblank{|-\adj}[d] \xys{}[r]^-{(\_) \times A}
&	\subm(I \times A)	\xyblank{|-\adj}[d] \xys{@<1ex>}[d]^{(q \times p)^*}
\\
\subm(\prjcm Z)
  \xys{@<1ex>}[u]^{\Sigma_q}
  \xys{@<1ex>}[r]^-{(\_) \times \prjcm X}
  \xyblank{|-\perp}[r]
&	\subm(\prjcm Z \times \prjcm X)
  \xys{@<1ex>}[l]^-{\forall_{\prjcm X}}
  \xys{@<1ex>}[u]^{\Sigma_{(q \times p)}}	}
\]
The right adjoints $\forall_A$ satisfy
the Beck-Chevalley condition since their left adjoints do.
\end{rem}

\subsection{The exact completion}

Carboni and Vitale have shown that
a projective cover $ \prjcm \colon \catcm \ftrmon \catem $ exhibits the exact category \catet
as the free exact category over \catct as a category with weak finite limits~\cite{CarboniVitale1998}.
An essential step in their proof consists in embedding \catct into an exact category \excomct,
the \emph{exact completion of \catct}.
The embedding $ \exfnctm \colon \catcm \ftrmon \excomcm $ is equivalent to a projective cover of \excomct,
namely the image of \exfnctt,
and it is universal among left covering functors from \catct into exact categories.
It follows that every exact category with enough projectives is (equivalent to)
the exact completion of any of its projective covers~\cite[Th.\ 16]{CarboniVitale1998}.
We shall state and prove the characterisation of cartesian closure
for a projective cover $\prjcm \colon \catcm \ftrmon \catem$ of \catet exact,
in order to take advantage of the richer structure of \catet.
Here we briefly recall from~\cite{CarboniVitale1998,Vitale1994}
the construction of \excomct from \catct and some properties
which we shall need in the next sections.

A \emph{pseudo equivalence relation} $ x_1,x_2 \colon \bar{X} \psrel X $ in \catct
consists of two arrows $ x_1,x_2 \colon \bar{X} \to X $
together with arrows
$\rho \colon X \to \bar{X}$, $\sigma \colon \bar{X} \to \bar{X}$
and $\tau \colon W \to \bar{X}$,
where
$ \bar{X} \overset{p_2}{\longleftarrow} W \overset{p_1}{\longrightarrow} \bar{X} $
is a weak pullback of
$ \bar{X} \overset{x_2}{\longrightarrow} X \overset{x_1}{\longleftarrow} \bar{X} $,
such that
\[
x_1 \rho = \id_X, \quad x_2 \rho = \id_X, \quad x_1 \sigma = x_2, \quad x_2 \sigma = x_1, \quad
x_1 \tau = x_1 p_1, \quad\text{and}\quad x_2 \tau = x_2 p_2.
\]
We shall denote a pseudo equivalence relation
just by its legs $x_1,x_2$.
Let $ x_1,x_2 \colon \bar{X} \psrel X $ and $ y_1,y_2 \colon \bar{Y} \psrel Y $
be two pseudo equivalence relations in \catct and let $ f \colon X \to Y $.
A \emph{tracking of $ f $ from $ x_1,x_2 $ to $ y_1,y_2 $}
is an arrow $ \bar{f} $ making the diagram
\begin{equation} \label{eq:track}
\xycenterm[=1em]{
  &	\bar{X}	\xys{}[ddl]_(.75){x_1\!} \xys{}[dr]^(.65){\!x_2} \xys{}[rrr]^{\bar{f}}	&&&%
	\bar{Y}	\xys{}[ddl]_(.75){y_1\!} \xys{}[dr]^(.65){\!y_2}	&\\
  &&	X	\xys{|!{[urr];[dr]}\hole}[rrr]_(.7)f	&&&	Y	\\
  X	\xys{}[rrr]_f	&&&	Y	&&}
\end{equation}
commute.
In case $ f $ has a tracking as above,
we say that it is \emph{extensional from $ x_1,x_2 $ to $ y_1,y_2 $}.
Finally, two arrows $f,f' \colon X \to Y$ in \catct are \emph{$ \bar{Y} $-related}
if there is $h \colon X \to \bar{Y}$ making
\[
\xycenterm{
  &	X	\xys{@/_/}[dl]_f \xys{}[d]^h \xys{@/^/}[dr]^{f'}	&\\
  Y	&	\bar{Y}	\xys{}[l]^{y_1} \xys{}[r]_{y_2}	&	Y	}
\]
commute.
Sometimes we find it convenient to write a commutative diagram
involving pseudo equivalence relations in more compact ways,
as in
\[
\xycenterm[C=2.5em]{
\bar{X}	\xypsrel[^-{x_2}]{d}_{x_1} \xys{}[r]^{\bar{f}}
&	\bar{Y}	\xypsrel[^-{y_2}]{d}_{y_1}
\\
X	\xys{}[r]_f	&	Y	}
\qquad \text{and} \qquad
\xycenterm[C=2.5em]{
\bar{U}	\xypsrel[^-{u_2}]{d}_{u_1} \xypsrel[^-{v_1}]{r}_{v_2}
&	V	\xys{}[d]^k
\\
U	\xys{}[r]_h	&	Z}
\]
where the parallel arrows shall always be the legs of a pseudo equivalence relation.
When we say that such diagrams commute,
we mean that they commute componentwise, that is $ f x_i = y_i \bar{f} $ and $ h u_i = k v_i $ for $ i = 1,2 $, respectively.

Objects of \excomct are pseudo equivalence relations in \catct and
arrows of \excomct between $ x_1,x_2 \colon \bar{X} \psrel X $
and $ y_1,y_2 \colon \bar{Y} \psrel Y $ are
equivalence classes $ [f,\bar{f}] $ of extensional arrows
$ f $ from $ x_1,x_2 $ to $ y_1,y_2 $ together with a tracking $ \bar{f} $,
where $ [f,\bar{f}] $ and $ [f',\bar{f'}] $ are identified if 
$f$ and $f'$ are $\bar{Y}$-related.
The embedding $ \exfnctm \colon \catcm \ftrmon \excomcm $ maps an object $ X $
to the free pseudo equivalence relation on $ X $, namely the pair of identities $ \id_X,\id_X $,
and an arrow $ f \colon X \to Y $ to the equivalence class $ [f,f] $
(which consists of $ f $ alone).
As for every left covering functor,
\exfnctt preserves all finite limits that happen to exist in \catct.

For a projective cover $\prjcm \colon \catcm \ftrmon \catem$,
the equivalence between \excomct and \catet follows from the fact that
\prjct generates \catet via coequalisers,
see~\cite[Proposition 15]{CarboniVitale1998}.
In particular,
for every pseudo equivalence relation $x_1,x_2 \colon \bar{X} \psrel X$ in \catct
there is $p \colon \prjcm X \repi A$ making
$\prjcm \bar{X} \psrel \prjcm X \repi A$ quasi exact and,
conversely, for every $A$ in \catet, there are $p$ and $x_1,x_2$ as above.
We shall say that $p$ is a \emph{\pquot of $x_1,x_2$}
and that $x_1,x_2$ is a \emph{\pker of $p$} if they
form a quasi exact diagram.
It is then clear that an arrow in \catct is extensional
\wrt a pair of pseudo equivalence relations
\iff it induces a (necessarily unique) arrow in \catet
between the \pquots of the two relations.
We shall spell out a particular case in \cref{lem:prespro}.
For details we refer to~\cite{Vitale1994} and, in particular,
to the proofs of Theorems 1.5.2 and 1.6.1 therein.

When the projective cover is the embedding $\exfnctm \colon \catcm \ftrmon \excomcm$,
every object $(X,\bar{X})$ of \excomct has a canonical \exfnctt-cover
$[\id_X,\rho] \colon \exfnctm X \repi (X,\bar{X})$.
If we denote by $x_1,x_2 \colon \bar{X}' \psrel X$ its \exfnctt-kernel,
then $\id_X$ gives rise to an isomorphism
$(X,\bar{X}) \cong (X,\bar{X}')$ in \excomct.

\begin{exmp}\label{gset-equiv}
Here we illustrate the above equivalence in the case of
the projective cover $\pjK \colon \GsetKlm \ftr \GsetEMm$
from \cref{exmp:gset-pjcov}.
The functor $\GsetEMm \ftr \excomm{(\GsetKlm)}$
maps an algebra $(A,\alpha)$ in \GsetEMt
to the pseudo equivalence relation
\begin{equation}\label{gset-peq}
\xycenterm[C=4em]{
A \times G	\xyklpsrel[^-{\id_{A \times G}}]{r}_-{\eta_A \alpha}
&	A	}
\end{equation}
in \GsetKlt.
The morphism $\alpha$ is a $\pjK$-quotient of it.
Moreover,
since a square like the one in~\eqref{gset-covsq} from \cref{exmp:gset-pjcov}
is a pullback, the diagram
\begin{equation}\label{gset-exs}
\xycenterm[C=3em]{
(A \times G \times G,\mu_{A \times G})	\xypsrel[^-{\mu_A}]{r}_-{\alpha \times G}
&	(A \times G,\mu_A)	\xyrepi{}[r]^-{\alpha}
&	(A,\alpha)	}
\end{equation}
is exact.
It is then easy to see that
the $\pjK$-cover $\eta_B f \colon A \tokl B$
of a morphism $f \colon (A,\alpha) \to (B,\beta)$
is extensional from $\id_{A \times G},\eta_A \alpha$
to $\id_{B \times G},\eta_B \beta$,
with tracking $\pbkun{f \times G,1_G} \colon A \times G \tokl B \times G$.
This gives the action of $\GsetEMm \ftr \excomm{(\GsetKlm)}$ on arrows.
Conversely, the functor $\excomm{(\GsetKlm)} \ftr \GsetEMm$
maps a pseudo equivalence relation as in~\eqref{gset-peq}
to its $\pjK$-quotient object $(A,\alpha)$.
The action on arrows is given by the universal property
of the coequaliser~\eqref{gset-exs}.
It follows that an arrow $f \colon A \tokl B$ is extensional
from $\id_{A \times G},\eta_A \alpha$ to $\id_{B \times G},\eta_B \beta$
\iff $\beta f \alpha = \beta \mu_B(f \times G)$
\iff $\beta f$ is a morphism $(A,\alpha) \to (B,\beta)$.
\end{exmp}

\begin{rem}\label{rem:caucom}
In light of \cref{prop:maxpjc},
for every projective cover $\prjcm \colon \catcm \ftrmon \catem$,
the full subcategory $\allprje$ of \catet on the projectives
is equivalent to the splitting of idempotents \caucomct of \catct.
It follows that
two categories with weak finite limits have equivalent exact completions
\iff they have equivalent splitting of idempotents.
We shall freely regard
the splitting of idempotents \caucomct of \catct
as the full subcategory of projectives of \excomct.
\end{rem}

%------------------------------------------------------------------------------------------------------------------------------------------
\section{Cartesian closure} \label{sec:cc}

In this section we present the characterisation of cartesian closure
for exact completions of categories with weak finite limits.
We begin with some definitions.

\begin{defin}
Let $(g_i \colon Y \to X_i)_{i=1}^n$ be a span  with $n$ legs
in a category \catct with weak finite limits.
A pair of arrows $ y_1,y_2 \colon \bar{Y} \to Y $
is a \emph{weak kernel pair of the span} $(g_i)_i$
if it is a weak limit of the diagram
\[
\xycenterm[C=1ex@R=2.5em]{
Y	\xys{|-{g_1}}[dr]
	\xyblank{|-{\cdots\ }}[drr]
	\xys{|-{g_n}}[drrr]
&&&&
Y	 \xys{|-{g_1}|!{[llll];[dl]}{\hole}}[dlll]
	 \xyblank{|-{\,\cdots}}[dll]
	 \xys{|-{g_n}}[dl]
\\
&	X_1	& \cdots &	X_n.	&}
\]
\end{defin}

It is well-known that, in a category with pullbacks,
a kernel pair of an arrow is an equivalence relation.
One proves similarly that, in a category with weak pullbacks,
a weak kernel pair of a span is a pseudo equivalence relation.
We shall freely regard weak kernel pairs as pseudo equivalence relations.

\begin{defin}
Let \catct be a category with weak finite limits.
Let $ f \colon Y \to X $ be an arrow
and $ x_1,x_2 \colon \bar{X} \psrel X $ a pseudo equivalence relation in \catct.
An \emph{extensional image of $ f $ in $ x_1,x_2 $} consists of three arrows
$ \bar{y} \colon \bar{Y} \to \bar{X} $ and $ y_1,y_2 \colon \bar{Y} \to Y $
which form a weak limit of 
\[
\xycenterm{
Y	\xys{}[d]_f	&	\bar{X}	\xys{}[dl]^{x_1} \xys{}[dr]_{x_2}
&	Y	\xys{}[d]^f
\\
X	&&	X.	}
\]
\end{defin}

In a category with weak finite limits \catct,
the arrows $ y_1,y_2 $,
which are part of an extensional image,
are the legs of a pseudo equivalence relation
$ y_1,y_2 \colon \bar{Y} \psrel Y $.
The arrow $ \bar{y} \colon \bar{Y} \to \bar{X} $
is a tracking for $ f $ from $ y_1,y_2 $ to $ x_1,x_2 $,
and $ [f,\bar{y}] $ is monic in \excomct.
In particular,
when $ f $ also has tracking $ \bar{f} $
from a pseudo equivalence relation $ z_1,z_2 \colon \bar{Z} \psrel Z $ to $ x_1,x_2 $,
the arrow $ [f,\bar{y}] $ is the image factorisation
of $ [f,\bar{f}] $ in \excomct~\cite{CarboniVitale1998}.
Note also that a weak kernel pair of $ f $
is the same thing as an extensional image of $ f $ in $ \id_X,\id_X $.
In this case, $ [f,\bar{y}] $ is the image factorisation
of $ \exfnctm f \colon \exfnctm Y \to \exfnctm X $ and
the assignation $ f \mapsto [f,\bar{y}] \colon (Y,\bar{Y}) \mono \exfnctm X $ describes the action
of the right-to-left part of the isomorphism $ \subm_{\excomcm}(\exfnctm X) \cong \posrefcslm{X} $.

\begin{defin}
Let \catct be a category with weak finite limits.
Let $ Z \overset{p_1}{\longleftarrow} V \overset{p_2}{\longrightarrow} X $ be a weak product
and let $ y_1,y_2 \colon \bar{Y} \psrel Y $ be a pseudo equivalence relation in \catct.
An arrow $ f \colon V \to Y $ \emph{preserves projections \wrt $ y_1,y_2 $}
if it is extensional from a weak kernel pair of $ p_1,p_2 $ into $ y_1,y_2 $.

Note that this definition does not depend
on the particular weak kernel pair of $ p_1,p_2 $.
When the pseudo equivalence relation on $ Y $ is clear from the context
we just say that $ f $ preserves projections.
\end{defin}

\begin{defin} \label{def:extexp}
Let $ X $ be an object and $ y_1,y_2 \colon \bar{Y} \psrel Y $ be a pseudo equivalence relation
in a category \catct with weak finite limits.
An \emph{extensional exponential of $ y_1,y_2 $ and $ X $}
consists of an object $W$, a weak product
$ W \toop V \to X $ and an arrow $e \colon V \to Y$
which preserves projections \wrt $ y_1,y_2 $
and which is weakly terminal with these properties,
that is,
for every object $ W' $, weak product $ W' \toop V' \to X $ and
arrow $ e' \colon V' \to Y $ preserving projections \wrt $ y_1,y_2 $,
there are arrows $ h \colon W' \to W $ and $ k \colon V' \to V $ making the diagram
\[
\xycenterm{
&	W'	\xydot{}[dl]_-h
&&	V'	\xys{}[ll] \xydot{}[dl]_-k \xys{@/^/|-\hole}[ddl]^(.7){e'}%
		\xys{@/^/}[dr]	&\\
W	&&	V	\xys{}[ll] \xys{}[d]_e \xys{}[rr]	&&	X	\\
	&&	Y	&&		}
\]
commute.
The arrow $ e $ is called \emph{extensional evaluation}.

A category with weak finite limits \emph{has extensional exponentials} if,
for every pseudo equivalence relation $ y_1,y_2 $ and for every object $ X $,
there is an extensional exponential of $ y_1,y_2 $ and $ X $.
\end{defin}

The name draws from the type
of extensional functions in dependent type theory,
which is an example of extensional exponential in the category of types
described in~\cite[Section 6]{EmmeneggerPalmgren2017}
or in~\cite[Section 7.1]{MaiettiRosolini2013a}.
An example in \GsetKlt will be given in \ref{exmp:gset-extexp},
for the moment we introduce
a slight strengthening of the notion of extensional exponential
and prove two properties of it.

\begin{defin} \label{def:esp}
Let \catct be a category with weak finite limits.
Let $ y_1, y_2 \colon \bar{Y} \psrel Y $ be a pseudo equivalence relation in \catct and
let $ Z \overset{g_1}{\longleftarrow} Y \overset{g_2}{\longrightarrow} X $ be a span
such that both $ g_1 $ and $ g_2 $ coequalise $ y_1,y_2 $.
An \emph{extensional simple product of $ g_1 $ and $ g_2 $ \wrt $ y_1,y_2 $} consists of a commutative diagram
\begin{equation} \label{eq:espdiag}
\xycenterm{
	W	\xys{}[d]_w	&	V	\xys{}[l] \xys{}[d]_e \xys{}[rd]	&		\\
	Z			&	Y	\xys{}[l]^-{g_1} \xys{}[r]_-{g_2}		&	X	}
\end{equation}
where $ W \toop V \to X $ is a weak product and $ e $ preserves projections \wrt $y_1,y_2$,
such that, for every commutative diagram
\begin{equation} \label{eq:espunivd}
\xycenterm{
	W'	\xys{}[d]_{w'}	&	V'	\xys{}[l] \xys{}[d]_{e'} \xys{}[rd]	&		\\
	Z			&	Y	\xys{}[l]^-{g_1} \xys{}[r]_-{g_2}		&	X	}
\end{equation}
where $ W' \toop V' \to X $ is a weak product and $ e' $ preserves projections,
there are arrows $h \colon W' \to W $ and $k \colon V' \to V $ making
\begin{equation} \label{eq:espuniv}
\xycenterm{
&	W'	\xydot{}[dl]_-h \xys{@/^/  |!{[dl];[dr]}\hole}[dddl]^(.7){\!w'}
&&	V'	\xys{}[ll]
		\xydot{}[dl]_-k
		\xys{@/^/ |!{[dl];[dddr]}\hole}[dddl]^(.7){\!e'}
		\xys{}[dddr]	&\\
W	\xys{}[dd]_w
&&	V	\xys{}[ll] \xys{}[dd]_e \xys{}[rrdd]	&&\\
&&&&\\
Z			&&	Y	\xys{}[ll]^-{g_1} \xys{}[rr]_-{g_2}		&&	X	}
\end{equation}
commute.
The arrow $ e $ in \eqref{eq:espdiag} is called \emph{extensional evaluation}.

A category with weak finite limits \emph{has extensional simple products} if,
for every pseudo equivalence relation $ y_1,y_2 \colon \bar{Y} \psrel Y $ and
for every span $ Z \overset{g_1}{\longleftarrow} Y \overset{g_2}{\longrightarrow} X $
such that $ g_1 y_1 = g_1 y_2 $ and $ g_2 y_1 = g_2 y_2 $,
there is an extensional simple product of $ g_1 $ and $ g_2 $ \wrt $ y_1,y_2 $.
\end{defin}

\begin{lem} \label{lem:esp2eexp}
Let \catct be a category with weak finite limits.
If \catct has extensional simple products,
then it has extensional exponentials.
\end{lem}

\begin{proof}
Let $ X $ be an object and $ y_1,y_2 \colon \bar{Y} \psrel Y $ be a pseudo equivalence relation in \catct.
Take first a weak product $ U $ of $ T $, $ X $ and $ Y $, where $ T $ is weakly terminal,
and then a weak limit $ u_1,u_2 \colon \bar{U} \psrel U $, $ \bar{u} \colon \bar{U} \to \bar{Y} $ of the diagram
\[
\xycenterm[R=3em]{
U	\xys{}[d] \xys{|!{[d];[rr]}{\hole}}[dr]
	\xys{|!{[d];[rr]}{\hole}|!{[dr];[rrrr]}{\hole}}[drrr]
&&	\bar{Y}	\xys{}[dll]
		\xys{|!{[dl];[rr]}\hole |!{[dr];[rr]}\hole}[drr]
&&	U	\xys{}[dlll] \xys{}[dl] \xys{}[d]
\\
Y	&	T	&&	X	&	Y.	}
\]
It easy to check that the pair $ u_1,u_2 $
form a pseudo equivalence relation on $ U $.
In fact a weak limit of the above diagram
can also be constructed taking first
a weak kernel pair of the span $ T \toop U \to X $,
and then an extensional image in $ y_1,y_2 $
(or vice versa, first an extensional image and then a weak kernel pair).
Informally, two elements in $ U $ are $ \bar{U} $-related
if their $ T $ and $ X $ components coincide
and their $ Y $ components are $ \bar{Y} $-related.
The two product projections in $ T $ and $ X $
coequalise $ u_1,u_2 $ by construction
and a straightforward computation shows that
an extensional simple product of $ T \toop U \to X $ \wrt $ u_1,u_2 $
is an extensional exponential of $ y_1,y_2 $ and $ X $.
\end{proof}

\begin{lem} \label{lem:wunivq}
Let \catct be a category with weak finite limits.
If \catct has extensional simple products,
then it has right adjoints to weak product functors.
\end{lem}

\begin{proof}
Consider the weak product functor
$ \wprod{X} \colon \posrefcslm{Z} \to \posrefcslm{(Z,X)}$
mapping $ f \colon Y \to Z $ to the span
$ Z \overset{f p_1}{\longleftarrow} V \overset{p_2}{\longrightarrow} X$,
where $ Y \overset{p_1}{\longleftarrow} V \overset{p_2}{\longrightarrow} X$
is a weak product.
We use extensional simple products to define a functor
$ \forall^{\textup{w}}_X $ going the other way.
Let $ Z \overset{g_1}{\longleftarrow} Y \overset{g_2}{\longrightarrow} X $ be a span
and let $ y_1,y_2 \colon \bar{Y} \psrel Y $ be its weak kernel pair.
Take an extensional simple product of $ g_1,g_2$ \wrt $ y_1,y_2 $ as~\eqref{eq:espdiag}
and define $ \forall^{\textup{w}}_X [g_1,g_2] \coloneqq [w] $.
A simple verification shows that
the universal property of extensional simple products exhibits the extensional evaluation
$ e \colon V \to Y $ as the counit of the adjunction $ \wprod{X} \adj \forall^{\textup{w}}_X $:
one only needs to observe that, in a commutative diagram as \eqref{eq:espunivd}
the arrow $ e' \colon V' \to Y $ always preserves projections
\wrt the weak kernel pair of $ g_1,g_2 $.
\end{proof}

Let us fix for the rest of the section an exact category \catet
together with a projective cover $\prjcm \colon \catcm \ftrmon \catem$
exhibiting \catet as the exact completion of \catct.
We can characterise arrows preserving projections as follows.

\begin{lem}\label{lem:prespro}
Let $ Z \toop V \to X $ be a weak product and
$ y_1,y_2 \colon \bar{Y} \psrel Y $ a pseudo equivalence relation in \catct
and let $ q \colon \prjcm Y \repi B $ be a \pquot of it in \catet.
The following are equivalent for an arrow $ f \colon V \to Y $ in \catct.
\begin{enumerate}
\item
The arrow $ f $ preserves projections \wrt $ y_1,y_2 $.
\item
There is a (necessarily unique) arrow $ g \colon \prjcm Z \times \prjcm X \to B $ in \catet such that the square
\[
\xycenterm{
  \prjcm V	\xys{}[d]_{\prjcm f} \xyrepi{}[r]	&	\prjcm Z \times \prjcm X	\xys{}[d]^g	\\
  \prjcm Y	\xyrepi{}[r]^q		&	B				}
\]
commutes.
\end{enumerate}
\end{lem}

\begin{proof}
Immediate from the definition.
\end{proof}

One half of the characterisation is straightforward.

\begin{prop} \label{prop:cc2esp}
If \catet is cartesian closed,
then \catct has extensional simple products.
\end{prop}

\begin{proof}
Let $ \bar{Y} \psrel Y $ be a pseudo equivalence relation in \catct
and let $ q \colon \prjcm Y \repi B $ be its \pquot in \catet.
Any span $ Z \toop Y \to X $ in \catct whose legs coequalise $ \bar{Y} \psrel Y $
induces an arrow $ g \colon B \to \prjcm Z \times \prjcm X $ in \catet.
Let $ w \colon W \to Z $ be the reflection in \catct
of the composite of the simple product
$ \Pi_{\prjcm X} g \colon \Pi_{\prjcm X} B \to \prjcm Z $
with a \pcover $ p \colon \prjcm W \repi \Pi_{\prjcm X} B $.
The extensional evaluation $e \colon V \to Y$ is obtained covering the composite
$ \textup{ev}(p \times \id) $:
\[
\xycenterm{
\prjcm V	\xys{}[d]_{\prjcm e} \xyrepi{}[r]
&	\prjcm W \times \prjcm X	\xys{}[d]^-{\textup{ev}(p \times \id)}
\\
\prjcm Y	\xyrepi{}[r]^-q
&	B.	}
\]
The top row exhibits $V$ as a weak product of $W$ and $X$ by \cref{lem:wlim-pjcov},
\cref{lem:prespro} ensures that $e$ preserves projections
and commutativity of \eqref{eq:espdiag} is immediate.
The required universal property follows from
that one of $ \Pi_{\prjcm X} f $ using again \cref{lem:wlim-pjcov,lem:prespro}.
\end{proof}

We can now provide an example of an extensional exponential
in a category with weak finite limits.

\begin{exmp}\label{exmp:gset-extexp}
Consider again the topos of $G$-sets from \cref{exmp:gset-pjcov}.
Let $X$ be any set and let  $\id_{B \times G},\eta_B \beta$
the pseudo equivalence relation
in \GsetKlt associated to an action $(B,\beta)$ as in \cref{gset-equiv}.
An exponential in \GsetEMt of $(B,\beta)$ and $\pjK X$ consists
of the set $B^{X \times G}$ and
the action $ \epsilon \colon B^{X \times G} \times G \to B^{X \times G} $
mapping a pair $(f,g_1)$ to the function
$ \epsilon(f,g_1) \colon (x,g_2) \mapsto \beta(f(x,g_2 \cdot g_1^{-1}),g_1) $.
The evaluation
$\textup{ev} \colon B^{X \times G} \times (X \times G) \to B$
in \catsett is a morphism from $\epsilon$ to $\beta$.

An extensional exponential in \GsetKlt of
the pseudo equivalence relation $\id_{B \times G},\eta_B \beta$
and the set $X$
is given by
the set $B^{G \times X}$
with extensional evaluation defined by the function
\[
\xycenterm[R=1ex]{
(B^{X \times G} \times G) \times (X \times G)	\xys{}[r]^-e
&	B \times G
\\
(f,g_1,x,g_2)	\xymapsto{}[r]
&	(\epsilon(f,g_1)(x,g_2),1_G).	}
\]
\end{exmp}

We now proceed to prove the converse to \cref{prop:cc2esp}.
We begin with an immediate consequence of \cref{lem:wunivq,rem:univq}.

\begin{lem} \label{lem:univq}
If \catct has extensional simple products,
then \catet has right adjoints to inverse images along products projections.
\end{lem}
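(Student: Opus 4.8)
The plan is to reduce the claim to the statement recalled at the end of Section~\ref{ssec:ccex}, namely that inverse images along product projections are left adjoints in \catet \iff the weak product functor $\wprod{X}\colon\posrefpm{J}\to\posrefpm{(J,X)}$ has a right adjoint for every pair of projectives $J,X\in\catpm$. Since the order reflection of $\catpm/(J,X)$ is isomorphic to $\subm_{\catem}(J\times X)$ and $\wprod{X}$ corresponds under this isomorphism to $\prodfnct X$ on subobjects, it suffices to produce, from a pseudo simple product, a right adjoint to $\wprod{X}$ at the level of the order reflections of the comma categories.

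First I would fix $J,X\in\catpm$ and a cone $a\colon Y\to J$, $b\colon Y\to X$ representing an element of $\posrefpm{(J,X)}$; by the remark after Definition~\ref{def:wsp} I may freely replace \catpt by \caucompt, and I may assume the cone comes from an actual subobject $Y\rightarrowtriangle I\mono J\times X$ if convenient, though the argument does not need this. Take a pseudo simple product $W\toop V\to X$ of the span $J\overset{a}{\longleftarrow}Y\overset{b}{\longrightarrow}X$, with structure map $w\colon W\to J$, and let $\Pi_X^w(a,b)\in\posrefpm{J}$ be the class of $w$. The two things to check are the unit and counit inequalities. For the counit, $\wprod{X}\,\Pi_X^w(a,b)\le(a,b)$: a weak pullback of $w$ along a projection is computed by a weak product $W\toop V'\to X$, and the pseudo simple product structure arrow $V\to Y$ exhibits exactly a cone map from (a cover of) $W\times X$ into $Y$ over $J\times X$, giving the required inequality in $\posrefpm{(J,X)}$. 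For the unit, given $c\colon Z\to J$ in $\catpm$ with $\wprod{X}(c)\le(a,b)$, I must produce a map $Z\to W$ over $J$; but $\wprod{X}(c)$ is (a cover of) $Z\times X$ equipped with the projection to $X$, so the hypothesis furnishes precisely a cone $Z\times X\to Y$ over $J\times X$, i.e.\ a diagram of the form appearing as the ``primed'' diagram in Definition~\ref{def:psp} (with $W'$ a weak product of $Z$ and $X$ and $V'$ a \pcover of it), and the weak terminality of the pseudo simple product yields $W'\to W$, hence after composing with $Z\to W'$ (the diagonal-type arrow into the weak product, which exists since $W'$ covers $Z\times X$) a map $Z\to W$ over $J$. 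Passing to order reflections, these two facts give the adjunction $\wprod{X}\dashv\Pi_X^w$.

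Having established $\wprod{X}\dashv\Pi_X^w$ for all $J,X$, the last sentence of Section~\ref{ssec:ccex} immediately upgrades this to the conclusion: inverse images along product projections $\bsch{(\proj_I)}{}\colon\subm_{\catem}(I)\to\subm_{\catem}(I\times X)$ are left adjoints, i.e.\ \catet has universal quantification $\forall_X$ along product projections, which is the claim.

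The main obstacle I anticipate is the bookkeeping in the unit inequality: one has to be careful that ``$\wprod{X}(c)$'' is only defined up to passing to a weak product $Z\overset{}{\longleftarrow}Z'\to X$ with $Z'$ a \pcover of $Z\times X$, so the cone $Z'\to Y$ one gets from $\wprod{X}(c)\le(a,b)$ lives over $Z'$, not over an actual product; matching this up with the ``primed diagram'' in Definition~\ref{def:psp} and then checking that the resulting $Z\to W$ really sits over $J$ (i.e.\ that $w$ composed with it equals $c$, not merely covers it) requires invoking that everything in sight is a cover and that projectivity of $Z$ lets one lift. This is routine but is the one place where the weakness of the limits genuinely intervenes, and it is worth spelling out.
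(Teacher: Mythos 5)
Your overall route is the same as the paper's: reduce to the observation at the end of \cref{ssec:ccex} (inverse images along product projections in \catet are left adjoints \iff each $ \wprod{X} \colon \posrefpm{J} \to \posrefpm{(J,X)} $ is), and then show that a pseudo simple product of a span representing an element of $ \posrefpm{(J,X)} $ computes the value of a right adjoint to $ \wprod{X} $. The reduction and the counit inequality are fine.

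The unit step, however, contains an error as written. Given $ c \colon Z \to J $ with $ \wprod{X}(c) \leq (a,b) $, the witnessing cone map is an arrow $ V' \to Y $ where $ Z \toop V' \to X $ is a weak product; the primed diagram of \cref{def:psp} should be instantiated with $ W' := Z $ itself (structure map $ c $) and with this $ V' $, whereupon weak terminality directly yields $ Z \to W $ satisfying $ w \circ (Z \to W) = c $ on the nose, and nothing further is needed. Instead you take $ W' $ to be a weak product of $ Z $ and $ X $ and then invoke an arrow $ Z \to W' $ ``which exists since $ W' $ covers $ Z \times X $''. No such arrow exists in general: covering $ Z \times X $ gives an arrow $ W' \to Z $, not a section of it, and a map into $ Z \times X $ from $ Z $ would require a map $ Z \to X $. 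So that composite cannot be formed, and the diagram you describe does not even have the shape required by \cref{def:psp} (there $ W' \toop V' \to X $ must be a weak product of $ W' $ and $ X $). The fix is immediate --- take $ W' = Z $ --- and with it your argument goes through; your closing worry about whether ``$ w $ composed with it equals $ c $, not merely covers it'' also evaporates, since the commutativity clause in the weak terminality of the pseudo simple product gives the equation exactly.
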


The validity of following lemma was already observed in~\cite{CarboniRosolini2000}.

\begin{lem}[Carboni--Rosolini] \label{lem:reduct}
The category \catet is cartesian closed \iff
$\prjcm X$ is exponentiable in \catet for every object $X$ in \catct .
\end{lem}

\begin{proof}
One direction is trivial,
let us then assume that every object in \catct is exponentiable in \catet.
Let $ A $ and $ B $ be two objects in \catet
and take a \pcover $ p \colon \prjcm X \repi A $ and a pseudo equivalence relation $x_1,x_2 \colon \bar{X} \psrel X$ in \catct
such that $ \prjcm \bar{X} \psrel \prjcm X \repi A $ is quasi exact.
Consider the following equaliser
\[
\xycenterm{
E\	\xymonor{}[r]^-i
&	B^{\prjcm X}	\xypsrel[^-{B^{\prjcm x_1}}]{rr}_-{B^{\prjcm x_2}}
&&	B^{\prjcm \bar{X}}.	}
\]
We shall prove that $ E $ is an exponential of $ A $ and $ B $.
The evaluation arrow $ e \colon E \times A \to B $ is obtained
from the universal property of the coequaliser in the top row below,
using commutativity of the solid arrows.
\[
\xycenterm[=2.5em]{
E \times \prjcm \bar{X}	%\xys{@/_2em/}[]+<-2em,-1ex>;[dd]+<-2.6em,1ex>
		\xymonor{}[]+<0ex,-.8em>;[d]_-{i \times \id}
		\xypsrel[^-{\id \times \prjcm x_1}]{r}_-{\id \times \prjcm x_2}
&	E \times \prjcm X	\xymonor{}[]+<0ex,-.8em>;[d]^-{i \times \id}
				\xyrepi{}[r]^-{\id \times p}
&	E \times A	\xydot{}[dd]^e
\\
B^{\prjcm X} \times \prjcm \bar{X}
	\xypsrel[^-{B^{\prjcm x_1} \times \id}]{d}_-{B^{\prjcm x_2} \times \id}
	\xypsrel{r}
&	B^{\prjcm X} \times \prjcm X	\xys{}[dr]_-{\textup{ev}}
&\\
B^{\prjcm \bar{X}} \times \prjcm \bar{X}	\xys{}[rr]_-{\textup{ev}}	&&	B	}
\]

Given $ C \in \catem $ and $ f \colon C \times A \to B $,
there is a unique arrow $ g' \colon C \to B^{\prjcm X} $ making the obvious triangle commute.
This unique arrow factors through $ i \colon E \mono B^{\prjcm X} $ since the left-hand diagram below commutes
and $ B^{\prjcm \bar{X}} $ is an exponential.
The resulting arrow $ g \colon C \to E $ satisfies $ e(g \times A) = f $
because of the commutativity of the right-hand diagram below.
\[
\xycenterm[C=2.7em]{
C \!\times\! \prjcm \bar{X}
  \xypsrel[^-{\id \times \prjcm x_2}]{d}_-{\id \times \prjcm x_1}
  \xys{}[r]^-{g' \!\times\! \prjcm \bar{X}}
&	B^{\prjcm X} \!\!\times\! \prjcm \bar{X}
  \xypsrel{d}
  \xypsrel[^-{B^{\prjcm x_1} \times \id}]{r}_-{B^{\prjcm x_2} \times \id}
&	B^{\prjcm\bar{X}} \!\!\times\! \prjcm\bar{X}	\xys{}[dd]^-{\textup{ev}}
\\
C \!\times\! \prjcm X
  \xyrepi{}[d]_-{C\!\times\!p} \xys{}[r]^-{g' \!\times\! \prjcm X}
&	B^{\prjcm X} \!\!\times\! \prjcm X	\xys{}[dr]^-{\textup{ev}}
&\\
C \!\times\! A	\xys{}[rr]^-f	&&	B	}
\quad
\xycenterm[C=1em]{
C \!\times\! \prjcm X	\xyrepi{}[d]_-{C\!\times\!p}
  \xys{}[dr]^(.7){g' \!\times\! \prjcm X}
  \xys{}@/^/[drrr]^(.7){g \!\times\! \prjcm X}
  \xyrepi{}[rrrr]^-{C\!\times\!p}
&&&&	C \!\times\! A	\xys{}[dd]^{g \!\times\! A}
\\
C \!\times\! A	\xys{}[d]_f
&	B^{\prjcm X} \!\!\times\! \prjcm X	\xys{}[dl]_-{\textup{ev}}
&&	  E \!\times\! \prjcm X
  \xymonol{}[]+<-2.2em,0ex>;[ll]^-{i \!\times\! \prjcm X}
  \xyrepi{}[dr]_-{E\!\times\!p}
&\\
B	&&&&	E \!\times\! A	\xys{}[llll]_-e	}
\]
Uniqueness of $ g $ follows from uniqueness of $ g' $
and monicity of $ i \colon E \mono B^{X} $.
\end{proof}

Before proving our main theorem we formulate one last lemma
which will turn out to be useful in the next section too.

\begin{lem} \label{lem:eexp2cc}
Suppose that \catet has right adjoints to inverse images along product projections.
If \catct has extensional exponentials, then \catet is cartesian closed.
\end{lem}

\begin{proof}
Thanks to \cref{lem:reduct},
it is enough to construct an exponential of $ B $
and $ \prjcm X $ with $ X \in \catcm $.
Take a \pcover $b \colon \prjcm Y \repi B $ of $B$ and
a \pker $ y_1,y_2 \colon \bar{Y} \psrel Y $ of $b$ and
let $ W $ and $ e \colon V \to Y $ be the object and extensional evaluation of
an extensional exponential of $ y_1,y_2 $ and $ X $.
The arrow $e$ induces an arrow $ w \colon \prjcm W \times \prjcm X \to B $
by \cref{lem:prespro},
and the kernel pair of
$ \pbkun{w,\proj_{\prjcm X}} \colon \prjcm W \times \prjcm X \to B \times \prjcm X $
factors through $ \prjcm W \times \prjcm W \times \Delta_{\prjcm X} $ by construction
via an arrow $ k \colon K \mono \prjcm W \times \prjcm W \times \prjcm X$.
Define $ r \coloneqq \forall_{\prjcm X} k \colon R \mono \prjcm W \times \prjcm W $,
where $ \prodfnct \prjcm X \adj \forall_{\prjcm X} $.
The adjunction relation yields that
\begin{equation} \label{ccex:eqrel}
\text{$ t = \pbkun{t_1,t_2}  \colon T \to \prjcm W \times \prjcm W  $ %
factors through $ r $ \iff $ w (t_1 \times \prjcm X) = w (t_2 \times \prjcm X) $.}
\end{equation}
Using \eqref{ccex:eqrel} one proves easily that
$ r $ is an equivalence relation and that
$ w $ coequalises $ r_1 \times \prjcm X $ and $ r_2 \times \prjcm X $.
It follows that
$ r $ has a quotient $ q \colon \prjcm W \repi E $ and that
there is a (unique) arrow
$ v \colon E \times \prjcm X \to B $
such that $ v (q \times \prjcm X) = w $.

Consider now an arrow $ f \colon C \times \prjcm X \to B $.
Take a \pcover $c \colon \prjcm Z \repi C$,
a \pker $z_1,z_2 \colon \bar{Z} \psrel Z$
and a \pcover $ e' \colon V' \to Y $ of the composite
$ f (c \times \prjcm X) \colon \prjcm Z \times \prjcm X \repi C \times X \to B $.
\Cref{lem:wlim-pjcov} ensures that $V'$ with the obvious projections is a
weak product of $Z$ and $X$ in \catct.
The arrow $ e' $ preserves projections by \cref{lem:prespro},
so the universal property of $ e $ yields arrows
$ h \colon Z \to W $ and $ k \colon V' \to V $
such that the diagram in \cref{def:extexp} commutes.
This easily entails that
$ w (\prjcm h \times \prjcm X) = f (c \times \prjcm X) $.
Since the diagram below commutes,
\[
\xycenterm[=2.5em@C=4em]{
\prjcm \bar{Z} \times \prjcm X	\xys{}[d]_-{\prjcm z_1 \times \prjcm X}
				\xys{}[r]^-{\prjcm z_2 \times \prjcm X}
&	\prjcm Z \times \prjcm X	\xyrepi{}[d]^-{c \times \prjcm X}
					\xys{}[r]^-{\prjcm h \times \prjcm X}
&	\prjcm W \times \prjcm X	\xys{}[dd]^-w
\\
\prjcm Z \times \prjcm X	\xyrepi{}[r]^-{c \times \prjcm X}
				\xys{}[d]_-{\prjcm h \times \prjcm X}
&	C \times \prjcm X	\xys{}[dr]^-f
&\\
\prjcm W \times \prjcm X	\xys{}[rr]^-w
&&	B	}
\]
the arrow
$ \pbkun{\prjcm(h z_1), \prjcm(h z_2)} \colon \prjcm \bar{Z} \to \prjcm W \times \prjcm W $
factors through $ r $ because of~\eqref{ccex:eqrel}.
It follows that $ q h \colon \prjcm Z \to E $
coequalises $ \prjcm z_1,\prjcm z_2 $,
and it thus induces an arrow $ \hat{f} \colon C \to E $.
The equation $ v (\hat{f} \times \prjcm X) = f $ follows immediately
once we precompose the two sides with the (regular) epi $ c \times \prjcm X $.

For uniqueness, let $ g \colon C \to E $
be such that $ v (g \times \prjcm X) = f $
and denote with $ l \colon \prjcm Z \to \prjcm W $
the lift of $ g c \colon \prjcm Z \to E $ along $ q \colon \prjcm W \repi E $.
The equation
\[
w (l \times \prjcm X) = v ((q l) \times \prjcm X) =
	v (g \times \prjcm X)(c \times \prjcm X) = f (c \times \prjcm X)
= w (\prjcm h \times \prjcm X),
\]
entails by~\eqref{ccex:eqrel} that
$ \pbkun{l,\prjcm h} \colon \prjcm Z \to \prjcm W \times \prjcm W $
factors through $ r $.
Therefore $ g = \hat{f} $ as desired.
\end{proof}

\begin{theor} \label{thm:ccex}
The category \catet is cartesian closed \iff \catct has extensional simple products.
\end{theor}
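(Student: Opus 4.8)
One implication was established just before the statement (if \catet is cartesian closed then \catpt has generalised weak simple products), so the plan is to prove the converse. By \cref{lem:reduct} it suffices to show that every $X\in\catpm$ is exponentiable in \catet. Fix $X\in\catpm$ and $B\in\catem$ and choose a quasi-exact presentation $y_1,y_2\colon Y_1\psrel Y_0\repi B$ with $Y_0,Y_1\in\catpm$; thus $Y_0\repi B$ is a \pcover and the canonical arrow $Y_1\to Y_0\times_B Y_0$ is a \pcover as well. Since \catpt has generalised weak simple products it has generalised weak exponentials, so fix a generalised weak exponential of $X$ and $Y_1\psrel Y_0$: an object $W\in\catpm$, a weak product $W\toop V\to X$ and an arrow $V\to Y_0$ preserving projections \wrtlong $y_1,y_2$, universal among such data. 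By \cref{rem:prespro} the arrow $V\to Y_0$ induces an arrow $\bar e\colon W\times X\to B$ in \catet.

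The first and main new step is to show that $(W,\bar e)$ is a quasi exponential of $X$ and $B$ in the sense of \cref{rem:expintproj}: for every $Z\in\catpm$ and $f\colon Z\times X\to B$ there is $g\colon Z\to W$ with $\bar e\circ(g\times X)=f$. Given $Z$ and $f$, I take a weak product $Z\toop V'\to X$ in \catpt and lift the composite $V'\repi Z\times X\xrightarrow{f}B$ along $Y_0\repi B$ to an arrow $\phi\colon V'\to Y_0$, which is possible since $V'$ is projective. Picking an equality $V'_1\psrel V'$ for this weak product, its two legs are coequalised by $V'\repi Z\times X$ and hence by $\phi$ followed by $Y_0\repi B$, so the induced pair $V'_1\to Y_0\times Y_0$ factors through the kernel pair $Y_0\times_B Y_0$; since $Y_1\repi Y_0\times_B Y_0$ and $V'_1$ is projective, this lifts to $V'_1\to Y_1$, \ie $\phi$ preserves projections \wrtlong $y_1,y_2$. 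Feeding the datum $(Z,\,Z\toop V'\to X,\,\phi)$ into the weak universal property of the generalised weak exponential gives $g\colon Z\to W$ together with $V'\to V$ over $W$, $X$ and $Y_0$; precomposing $\bar e\circ(g\times X)$ with the cover $V'\repi Z\times X$ and using these commutativities reduces it to $f\circ(V'\repi Z\times X)$, whence $\bar e\circ(g\times X)=f$.

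Next I form the quotient of $W$ that will serve as $B^X$. Because \catpt has pseudo simple products (a special case of generalised weak simple products, cf.\ \cref{rem:gwsp}), \cref{lem:univq} gives right adjoints $\forall_X$ to inverse image along $I\times X\to I$ for $I$ projective, and the case of a general $I\in\catem$ follows by descending such an adjoint along a \pcover of $I$, using exactness of \catet. Let $S\mono W\times W\times X$ be the equaliser of the two arrows $W\times W\times X\to B$ built from $\bar e$ and the two projections $W\times W\to W$, and set $W_1\mathrel{:=}\forall_X(S)\mono W\times W$. One checks that $W_1\psrel W$ is an equivalence relation and that, by adjointness of $\forall_X$ and Beck--Chevalley, an arrow $(a,b)\colon T\to W\times W$ factors through $W_1$ \iff $\bar e\circ(a\times X)=\bar e\circ(b\times X)$. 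Let $q\colon W\repi E$ be the coequaliser of $W_1\psrel W$ in the exact category \catet. Since $-\times X$ preserves regular epis, $q\times X$ is the coequaliser of its kernel pair $W_1\times X\psrel W\times X$; as $\bar e$ is $W_1$-invariant by construction, it factors through $q\times X$ as an arrow $e\colon E\times X\to B$.

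It remains to check that $(E,e)$ is an exponential of $X$ and $B$. Given $C\in\catem$ and $f\colon C\times X\to B$, pick a \pcover $p\colon Z\repi C$ with $Z\in\catpm$; the quasi exponential property yields $h\colon Z\to W$ with $\bar e\circ(h\times X)=f\circ(p\times X)$. Since $p$ coequalises its kernel-pair projections, so does $f\circ(p\times X)$, hence also $\bar e\circ(h\times X)$; by the factorisation property of $W_1$ this forces $q\circ h$ to coequalise the kernel pair of $p$, so it descends along $p$ (which, \catet being exact, is the coequaliser of that kernel pair) to $g\colon C\to E$, and precomposing with the cover $p\times X$ gives $e\circ(g\times X)=f$. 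For uniqueness, if $g'$ also satisfies this then $g\circ p$ and $g'\circ p$ lift along $q$ (as $Z$ is projective) to arrows $Z\to W$ that agree after $\bar e\circ(-\times X)$, hence factor jointly through $W_1$ and become equal after $q$; thus $g\circ p=g'\circ p$ and $g=g'$. By \cref{lem:reduct}, \catet is cartesian closed. I expect the delicate points to be the construction of $W_1$ --- making sense of $\forall_X$ over the generally non-projective base $W\times W$ via the descent argument above --- and, throughout the quasi exponential step, keeping the distinction between ``determined by projections'' and ``preserves projections \wrtlong $y_1,y_2$'', which is precisely where the covering-square clause of quasi-exactness is used; the rest runs parallel to the second half of the proof of Theorem~2.5 in~\cite{CarboniRosolini2000}, with the weak exponential $Y_0^X$ there replaced by $W$, which, unlike $Y_0^X$, does cover the exponential.
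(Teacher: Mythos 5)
Your proof is correct and follows essentially the same route as the paper's: reduce to a projective exponent via \cref{lem:reduct}, take a generalised weak exponential $W$ of $X$ and a quasi-exact presentation of $B$, quotient $W$ by the equivalence relation $\forall_X$ of the ``equal evaluation'' subobject of $W\times W\times X$, and verify the universal property by lifting through a \pcover of $C$. The only (harmless) variations are that you isolate the quasi-exponential property of $(W,\bar e)$ as an explicit intermediate step --- which the paper folds into the phrase ``any \catpt-cover $V'\to Y_0$ of $f(z\times X)$ preserves projections'' --- and that you descend along the actual kernel pair of $Z\repi C$ rather than a projective presentation $Z_1\psrel Z_0$.
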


\begin{proof}
It is now only a matter of putting all the pieces together.
The left-to-right direction is \cref{prop:cc2esp}.
For the converse, \cref{lem:univq} and \cref{lem:esp2eexp} provide the hypothesis for \cref{lem:eexp2cc},
which then yields the cartesian closure of \catet.
\end{proof}

\begin{rem} \label{rem:psp}
Inspecting the proof of~\cref{lem:wunivq},
one sees that only a specific kind of extensional simple product is used.
In light of this fact,
if we say that a \emph{logical simple product of %
$ Z \overset{g_1}{\longleftarrow} Y \overset{g_2}{\longrightarrow} X $}
is an extensional simple product of $ g_1,g_2 $
\wrt a weak kernel pair of $ g_1,g_2 $,
then we could weaken the hypothesis in \cref{lem:univq}
to requiring only logical simple products.

It is not difficult to see that the converse is true as well:
if \catet has right adjoints to inverse images along products projections,
a logical simple product of $ g_1,g_2 $ in \catct is obtained taking \pcovers
of $ \forall_{\prjcm X} b $ and
of the counit $ \forall_{\prjcm X} b \times \prjcm X \to b $,
where $ b \colon B \mono \prjcm Z \times \prjcm X $
is the image factorisation of $ \pbkun{\prjcm g_1,\prjcm g_2} $ in \catet.
In particular, we could replace the hypothesis on \catet in \cref{lem:eexp2cc}
requiring instead that \catct has logical simple products.
It follows that existence of extensional simple products is
equivalent to existence of extensional exponentials and of logical simple products.
\end{rem}

\begin{rem} \label{rem:wesp}
There is an apparently weaker notion of extensional simple product
which in fact turns out to be equivalent to the one we considered in \cref{def:esp},
in the sense that
existence of one implies existence of the other one.
Say that a diagram as \eqref{eq:espdiag} in \cref{def:esp}
is a \emph{weakly extensional simple product}
if, for every diagram as \eqref{eq:espunivd},
there are three arrows
$h \colon W' \to W $, $k \colon V' \to V $ and $j \colon V' \to \bar{Y} $
making the two diagrams below commute.
\[
\xycenterm{
&	W'	\xys{}[dl]_-{w'} \xys{}[d]^-h
&	V'	\xys{}[l] \xys{}[d]_-k \xys{}[dr]
&&%
V	\xys{}[d]_-e
&	V'	\xys{}[l]_-k \xys{}[d]_-j \xys{}[dr]^-{e'}
&\\
Z	&	W	\xys{}[l]^-w
&	V	\xys{}[l] \xys{}[r]	&	X
&%
Y	&	\bar{Y}	\xys{}[l]^-{y_1} \xys{}[r]_-{y_2}	&	Y	}
\]
That is, the arrow $ k \colon V' \to V $
makes $ e $ and $ e' $ not equal but only $ \bar{Y} $-related,
as witnessed by $j$.
One can define weakly extensional exponentials similarly.

Of course, an extensional simple product is also a weakly extensional simple product,
where the arrow $ V' \to \bar{Y} $ is the composite of
$ e' \colon V' \to Y $ with reflexivity of $ y_1,y_2 $.
The converse is false in general.
Indeed, consider \cref{exmp:gset-extexp}:
the function
$ e' \colon B^{X \times G} \times G \times X \times G \to B \times G $
defined by $ f,g_1,x,g_2 \mapsto \pbkun{f(x,g_2 \cdot g_1^{-1}), g_1} $
gives rise to a weakly extensional exponential in \GsetKlt of $y_1,y_2$ and $X$,
and it is possible to show that it is not an extensional exponential.
Nevertheless, given a weakly extensional simple product of
$ g_1,g_2 $ \wrt $ y_1,y_2 $
we obtain an extensional simple product of $ g_1,g_2 $ \wrt $ y_1,y_2 $
simply replacing the (weakly) extensional evaluation
$ e \colon V \to Y $ with the arrow $ f \colon U \to Y $ in
\[
\xycenterm{
U	\xys{@/^1.5em/}[rr]^-f \xys{}[d] \xys{}[r]
&	\bar{Y}	\xys{}[d]^-{y_1} \xys{}[r]_-{y_2}
&	Y
\\
V	\xys{}[r]_-e
&	Y	&}
\]
where the square is a weak pullback.
It is then clear that $ f $ is an extensional evaluation
once we know that $ W \toop U \to X $ is a weak product and $ f $ preserves projections.
To show these two facts we may assume without loss of generality that
there is a projective cover $\prjcm \colon \catcm \ftrmon \catem$
of an exact category \catet.
In this setting, the arrow $ f $ fits in a covering square
\[
\xycenterm{
\prjcm U	\xys{}[d]_{\prjcm f} \xyrepi{}[r]
&	\prjcm V	\xys{}[d]^{q \prjcm e}
\\
\prjcm Y	\xyrepi{}[r]_q		&	B,			}
\]
where $q$ is a \pquot of $ y_1,y_2 $.
The top cover exhibits $U$ as a weak product of $ W $ and $ X $
by \cref{lem:wlim-pjcov}.
Since $ e \colon V \to Y $ preserves projections,
\cref{lem:prespro} implies that $ q \prjcm e $ factors through
$ \prjcm V \repi \prjcm W \times \prjcm X $
via an arrow $ \hat{e} \colon \prjcm W \times \prjcm X \to B $.
It follows that the square below commutes
\[
\xycenterm{
  U	\xys{}[d]_f \xyrepi{}[r]	&	\prjcm W \times \prjcm X	\xys{}[d]^{\hat{e}}	\\
  Y	\xyrepi{}[r]_q	&	B			}
\]
and an additional application of \cref{lem:prespro} let us conclude
that $ f $ preserves projections.
\end{rem}

\section{Local cartesian closure}\label{sec:lcc}

Whenever $\prjcm \colon \catcm \ftrmon \catem$
is a projective cover and $X$ is an object in \catct,
the induced functor $ \prjcm_{/X} \colon \catcm/X \ftrmon \catem/\prjcm X$
is a projective cover.
Hence we see that \catet is locally cartesian closed \iff
every slice of \catct has extensional simple products:
one simply applies \cref{thm:ccex} to derive the cartesian closure
of slices of the form $ \catem/\prjcm X $,
and then use descent along a regular epi $ p \colon \prjcm X \repi A $
to obtain cartesian closure for an arbitrary slice $ \catem/A $.
Yet, it would be good to also have a ``global'' characterisation,
\ie as a property of \catct instead of each of its slices,
in the same way as existence of dependent products
in a category \catet with finite limits
is equivalent to existence of exponentials in every slice of \catet.
In this section we accomplish this goal.

Let $ X \toop V \to W $ be a weak pullback of a cospan $ X \to Z \toop W $
and let $ y_1,y_2 \colon \bar{Y} \psrel Y $ be a pseudo equivalence relation.
Say that an arrow $ V \to Y $ \emph{preserves (pullback) projections \wrt $ y_1,y_2 $}
if it has a tracking from a weak kernel pair of the pullback span $ X \toop V \to W $ in $ y_1,y_2 $.
Note that the forgetful functor $\catcm/Z \ftr \catcm $ preserves
and reflects pseudo equivalence relations
and that an arrow preserves product projections in $ \catcm/Z $
\iff it preserves pullback projections in \catct.

\begin{defin} \label{def:edp}
Let \catct be a category with weak finite limits.
Let $ y_1, y_2 \colon \bar{Y} \psrel Y $ be a pseudo equivalence relation
and let $ Y \overset{g}{\longrightarrow} X \overset{f}{\longrightarrow} Z $
be a pair of arrows such that $ g y_1 = g y_2 $.
An \emph{extensional dependent product of $ g $ along $ f $ \wrt $ y_1,y_2 $} consists of a commutative diagram
\begin{equation} \label{eq:edpdiag}
\xycenterm{
Y	\xys{}[dr]_g	&	V	\xys{}[l]_e \xys{}[d] \xys{}[r]
&	W	\xys{}[d]^w
\\
&	X	\xys{}[r]_f	&	Z	}
\end{equation}
where the square is a weak pullback and
$ e $ preserves projections \wrt $ y_1,y_2 $,
and such that for any commutative diagram
\begin{equation} \label{eq:edpuniv}
\xycenterm{
Y	\xys{}[dr]_g	&	V'	\xys{}[l]_{e'} \xys{}[d] \xys{}[r]
&	W'	\xys{}[d]^{w'}
\\
&	X	\xys{}[r]_f	&	Z	}
\end{equation}
where the square is a weak pullback and
$ e' $ preserves projections \wrt $ y_1,y_2 $,
there are arrows $ h \colon W' \to W $ and $ k \colon V' \to V $ making
\[
\xycenterm{
&&	V'	\xys{@/_1em/}[dll]_-{e'}
		\xydot{|-k}[dl]
		\xys{@/^/ |!{[dl];[dr]}\hole}[ddl]
		\xys{}[rr]
&&	W'	\xydot{|-h}[dl] \xys{@/^/}[ddl]^-{w'}
\\
Y	\xys{}[dr]_-g	&	V	\xys{}[l]_-e \xys{}[d] \xys{}[rr]
&&	W	\xys{}[d]_-w
&\\
&	X	\xys{}[rr]_-f	&&	Z	&}
\]
commute.
The arrow $ e \colon V \to Y $ is called \emph{extensional evaluation}.

A category with weak finite limits \emph{has extensional dependent products} if,
for every pseudo equivalence relation $ y_1,y_2 \colon \bar{Y} \psrel Y $
and arrows $ Y \overset{g}{\longrightarrow} X \overset{f}{\longrightarrow} Z $
such that $ g y_1 = g y_2 $,
there is an extensional dependent products of $ g $ along $ f $ \wrt $ y_1,y_2 $.
\end{defin}

\begin{lem} \label{lem:esp2edp}
Let \catct be a category with weak finite limits.
If every slice of \catct has extensional simple products,
then \catct has extensional dependent products.
\end{lem}

\begin{proof}
This is straightforward:
an extensional dependent product of
$ Y \overset{g}{\longrightarrow} X \overset{f}{\longrightarrow} Z $ \wrt $ y_1,y_2 $
is given by an extensional simple product of $ f g, g $ \wrt $ y_1,y_2 $
in $ \catcm/Z $,
one only needs to observe that $y_1,y_2$ is a pseudo equivalence relation
in $\catcm/Z$ on $fg$ because $g y_1 = g y_2$.
\end{proof}

The converse is true as well
---it will follow from \cref{thm:lccex} and \cref{prop:cc2esp}.
But what we need is a bit less.

\begin{lem} \label{lem:edp2eexp}
Let \catct be a category with weak finite limits.
If \catct has extensional dependent products, 
then every slice of \catct has extensional exponentials.
\end{lem}

\begin{proof}
Let $ x,y \in \catcm/Z $ and let $ y_1,y_2 \colon \bar{Y} \psrel Y $ be a pseudo equivalence relation in $ \catcm/Z $
on $ y \colon Y \to Z $.
Take a weak pullback
$ X \overset{f_1}{\longleftarrow} U \overset{f_2}{\longrightarrow} Y $
of $ x $ and $ y $ and let $ u_1, u_2 \colon \bar{U} \psrel U $
be the pseudo equivalence relation defined in the weak limit diagram
\begin{equation}\label{eq:extkp}
\xycenterm[C=3em]{
&	\bar{U}	\xys{}[dl]_{u_1}  \xys{}[dr]^{u_2} \xys{}[d]
&\\
U	\xys{}[d]_{f_2} \xys{|!{[d];[r]}\hole}[dr]_(.7){f_1\!}
&	\bar{Y}	\xys{}[dl] \xys{|!{[d];[r]}\hole}[dr]
&	U	\xys{}[dl]^(.7){\!f_1} \xys{}[d]^{f_2}
\\
Y	&	X	&	Y.	}
\end{equation}
Clearly, $ f_1 $ coequalises $ u_1, u_2 $.
Consider then an extensional dependent product of
$ U \overset{f_1}{\longrightarrow} X \overset{x}{\longrightarrow} Z $ \wrt $ u_1, u_2 $
and let $w \colon W \to Z$ and $ e \colon V \to U $
be its object in $\catcm/Z$ and its extensional evaluation, respectively.
We shall prove that $ w $ and $ f_2 e \colon V \to Y $
form an extensional exponential of $ y_1,y_2 $ and $ x $ in $ \catcm/Z $.

Suppose then that there are $ w' \colon W' \to Z $,
a weak pullback
$ X \overset{p'_1}{\longleftarrow} V' \overset{p'_2}{\longrightarrow} W' $
of $ x $ and $ w' $,
and $ e' \colon V' \to Y $ which preserves projections
\wrt $ y_1,y_2 $ and such that $ y e' = x p'_1 $.
It follows by the latter equation
that there is an arrow $ \hat{e} \colon V' \to U $ such that
$ f_1 \hat{e} = p'_1 $ and $ f_2 \hat{e} = e' $.
Using the tracking of $ e' $ and
the universal property of diagram~\eqref{eq:extkp}
one sees that $ \hat{e} $ preserves projections \wrt $ u_1, u_2 $.
By the universal property of $ w $ and $ e $
there are dotted arrows below which make the diagram
\[
\xycenterm[C=2.5em]{
&&&	V'	\xys{@/_1em/}[dlll]_{e'}
		\xys{}[dll]_{\hat{e}}
		\xydot{}[dl]
		\xys{@/^/ |!{[dl];[dr]}\hole}[ddl]
		\xys{}[rr]
&&	W'	\xydot{}[dl] \xys{@/^/}[ddl]^{w'}
\\
Y	\xys{}[dr]_-y	&	U	\xys{|-{\,f_2\,}}[l] \xys{|-{f_1}}[dr]
&	V	\xys{}[l]^e \xys{}[d] \xys{}[rr]
&&	W	\xys{}[d]_w
&\\
&	Z	&	X	\xys{}[l]^-x \xys{}[rr]_-x	&&	Z	&}
\]
commute.
We can thus conclude that
$ w \colon W \to Z $ and $ f_2 e \colon V \to Y $
enjoy the universal property of extensional exponentials in $ \catcm/Z $ as required.
\end{proof}

\begin{lem} \label{lem:wradj}
Let \catct be a category with weak finite limits.
If \catct has extensional dependent products,
then it has right adjoints to weak pullback functors.
\end{lem}

\begin{proof}
The value of the right adjoint along $f$ at a given arrow $g$
is defined taking an extensional dependent product of $g$ along $f$
\wrt a weak kernel pair of $g$.
The proof goes, mutatis mutandis, as that one of \cref{lem:wunivq}.
\end{proof}

\begin{lem} \label{lem:radj}
Let $\prjcm \colon \catcm \ftrmon\catem $ be a projective cover
of an exact category \catet.
If \catct has extensional dependent products,
then \catet has right adjoints to inverse images along any arrow.
\end{lem}

\begin{proof}
From \cref{lem:wradj} and the isomorphisms in \eqref{eq:subobj} we conclude
that \catet has right adjoints to functors
$ (\prjcm f)^* \colon \subm(\prjcm Y) \to \subm(\prjcm X) $
with $ f \colon X \to Y $ in \catct.
The right adjoint to reindexing along an arrow $g$ in \catet
is obtained from the right adjoint to reindexing along a \pcover of $g$
as in \cref{rem:univq}.
\end{proof}

\begin{theor} \label{thm:lccex}
Let $\prjcm \colon \catcm \ftrmon\catem $ be a projective cover
of an exact category \catet.
Then \catet is locally cartesian closed \iff \catct has extensional dependent products.
\end{theor}

\begin{proof}
The left-to-right direction follows from \cref{prop:cc2esp} and \cref{lem:esp2edp}.
For the converse, let $ Z \in \catcm $.
\Cref{lem:radj} entails that $ \catem/\prjcm Z $
has right adjoints along product projections,
and \cref{lem:edp2eexp} ensures that $ \catcm/Z $ has extensional exponentials.
The slice $ \catem/\prjcm Z $ is then cartesian closed by \cref{lem:eexp2cc}.

For the general case,
let $ I $ be an object in \catet,
$ p \colon \prjcm Z \repi I $ a \pcover of $ I $,
$ z_1,z_2 \colon \bar{Z} \psrel Z $ a \pker of $p$
and $ \bar{p} \coloneqq p \prjcm z_1 = p \prjcm z_2 $.
Given $ a \colon A \to I $ and $ b \colon B \to I $,
there are quasi exact diagrams
$ \prjcm \bar{Z} \times_I A \psrel \prjcm Z \times_I A \repi A $
and $ \prjcm \bar{Z} \times_I B \psrel \prjcm Z \times_I B \repi B $
over $ \prjcm \bar{Z} \psrel \prjcm Z \repi I $.
We can form exponentials $ (p^*b)^{p^*a} \colon E \to \prjcm Z $
in $ \catem/\prjcm Z $ and
$ (\bar{p}^*b)^{\bar{p}^*a} \colon \bar{E} \to \prjcm \bar{Z} $
in $ \catem/\prjcm \bar{Z} $.
Pasting pullbacks together
\[
\xycenterm[C=3.5em]{
\bar{E} \times_{\prjcm \bar{Z}} (\prjcm \bar{Z} \prescript{}{i}{\times}_{\prjcm Z} A)
    \xys{}[d] \xys{}[r]
&	\prjcm \bar{Z} \prescript{}{i}{\times}_{\prjcm Z} A
    \xys{}[d]^{\bar{p}^*a} \xys{}[r]^{\prjcm z_i \times A}
&	\prjcm Z \times_I A	\xys{}[d]^{p^*a} \xyrepi{}[r]
&	A	\xys{}[d]^a
\\
\bar{E}	\xys{}[r]^-{(\bar{p}^*b)^{\bar{p}^*a}}
&	\prjcm \bar{Z}	\xys{}[r]^-{\prjcm z_i}
			\xys{}@/_.8em/[];[rr]+<-1ex>_-{\bar{p}}
&	\prjcm Z	\xyrepi{}[r]^-p
&	I	}
\]
we see that, for $ i = 1,2 $,
$\prjcm \bar{Z} \prescript{}{i}{\times}_{\prjcm Z} A
\cong \prjcm \bar{Z} \times_I A$,
$\bar{E} \times_{\prjcm \bar{Z}} (\prjcm \bar{Z} \prescript{}{i}{\times}_{\prjcm Z} A)
\cong \bar{E} \prescript{}{i}{\times}_{\prjcm Z} (\prjcm Z \times_I A)
\cong \bar{E} \times_I A$
and, similarly, that
$E \times_{\prjcm Z} (\prjcm Z \times_I A)
\cong E \times_I A$.
We can thus apply the universal property of $(p^*b)^{p^*a}$ to obtain
$ e_i \colon (\prjcm z_i)(\bar{p}^*b)^{\bar{p}^*a} \to (p^*b)^{p^*a}$
in $\catem/\prjcm Z$ such that the diagram below commutes,
for $ i = 1,2 $.
\[
\xycenterm[C=4em]{
\bar{E} \times_I A
    \xys{@/_1ex/}[ddr]
    \xys{}[drr]^-{\textup{ev}}
    \xys{}[rr]^-{e_i \times A}
&&	E \times_I A
    \xys{@/_1ex/ |!{[d];[dr]}{\hole}}[ddr]
    \xys{}[drr]^-{\textup{ev}}
&&\\
&&	\prjcm \bar{Z} \times_{\prjcm Z} B
    \xys{@/^1ex/}[dl]_-{\bar{p}^*b}
    \xys{}[rr]^(.45){\prjcm z_i \times B}
&&	\prjcm Z \times_I B	\xys{@/^1ex/}[dl]_-{p^*b}
&\\
&	\prjcm \bar{Z}	\xys{}[rr]^-{\prjcm z_i}	&&	\prjcm Z	&&}
\]
It is possible to show that the image factorisation $r \colon R \mono E \times E$
of $ (e_1 , e_2) \colon \bar{E} \to E \times E $
is an equivalence relation in \catet.
Indeed, in the internal logic of \catet,
the exponential object $E$ is a set of functions with values in
$ \prjcm Z \times_I B $.
A pair functions in $E \times E$ is in $r$
\iff their $B$-components are equal and their $Z$-components
are $\bar{Z}$-related.
Let $ q \colon E \repi Q $ be a quotient of $r$
and note that it is also a coequaliser of $e_1,e_2$.
Let $ Q \to I $ be the universal arrow induced by
$p (p^*a)^{p^*b} \colon E \to \prjcm Z \repi I$.
The evaluation is given by the universal property of the coequaliser
\[
\xycenterm[C=3.5em]{
\bar{E} \times_I A	\xypsrel[^{e_1 \times A}]{r}_{e_2 \times A}
&	E \times_I A	\xyrepi{}[r]^{q \times A}	& Q \times_I A	}
\]
applied to
$ (b^*p) \textup{ev} \colon E \times_I A \to \prjcm Z \times_I B \repi B $.
The verification of the universal property
is straightforward and it is left to the reader.
\end{proof}

\section{Carboni and Rosolini's characterisation} \label{sec:carochar}

In this section we investigate the relation between extensional
and weak simple products, introduced in~\cite{CarboniRosolini2000},
and show that the proof of Theorem 2.5 in~\cite{CarboniRosolini2000}
tacitly uses an additional assumption.
In particular,
when \catct has finite limits,
Carboni and Rosolini's characterisation is still valid
and ours reduces to it.

Let us begin recalling few basic facts about internally projective objects.
An object $ P $ in a category with binary products is
\emph{internally (regular) projective} if,
for every object $ C $, regular epi $ A \repi B $ and arrow $ C \times P \to B $,
there are an object $ T $ and arrows $ T \repi C $ and $ T \times P \to A $
such that the following diagram commutes.
\[
\xycenterm{
T \times P	\xyrepi{}[d] \xys{}[r]	&	A	\xyrepi{}[d]	\\
C \times P	\xys{}[r]		&	B			}
\]

\begin{rem}\label{rem:intprj}
Let $ \prjcm \colon \catcm \ftrmon \catem $ be a projective cover
of an exact category \catet.
If $P$ is internally projective in \catet,
we may take $T$ above to be of the form $\prjcm X$ for $X$ in \catct,
so that the arrow $ T \repi C $ is a \pcover of $C$.
Furthermore, the following are equivalent for an object $ X $ in \catct:
\begin{enumerate}
\item
$ \prjcm X $ is internally projective,
\item
the functor $ \prodfnct \prjcm X \colon \catem \to \catem $
preserves projectives,
\end{enumerate}
and, if $ \prjcm X $ is exponentiable in \catet,
we may add the following to the above list of equivalents,
because of the adjunction relation:
\begin{enumerate}[resume]
\item
the exponential functor $ (\_)^{\prjcm X} $ preserves regular epis,
\item
the simple product functor $ \Pi_{\prjcm X} $ preserves regular epis.
\end{enumerate}
\end{rem}

In the last two conditions above
we may also replace ``regular epis'' with ``\pcovers''.
More precisely, we have the following for $ (\_)^{\prjcm X} $
and a similar statement for $ \Pi_{\prjcm X} $.

\begin{lem} \label{lem:expintproj}
Let \catet be an exact category
with a projective cover $ \prjcm \colon \catcm \ftrmon \catem $.
Let $ X $ be an object in \catct and suppose that
$ \prjcm X $ is exponentiable in \catet.
Then the following are equivalent.
\begin{enumerate}
\item
$ \prjcm X $ is internally projective.
\item
For every $ B $ in \catet and every \pcover $ q \colon \prjcm Y \repi B $,
$ q^{\prjcm X} \colon {\prjcm Y}^{\prjcm X} \to B^{\prjcm X} $ is regular epic.
\item
For every $ B $ in \catet there is a \pcover $ q \colon \prjcm Y \repi B $
such that $ q^{\prjcm X} \colon {\prjcm Y}^{\prjcm X} \to B^{\prjcm X} $
is regular epic.
\end{enumerate}
\end{lem}

\begin{proof}
We only need to prove that (3) implies (1), the other two implications being obvious.
Given $ f \colon A \repi B $,
take a lift $ l \colon \prjcm Y \to A $ of $ q \colon \prjcm Y \repi B $ along $ f $.
It follows that $ f^{\prjcm X} l^{\prjcm X} = q^{\prjcm X} $,
which entails that $ f^{\prjcm X} $ is a regular epi.
\end{proof}

The relation between projective objects and internally projective objects
is clarified by the following equivalences.

\begin{lem} \label{lem:proj&prd}
Let \catet be an exact category with enough projectives.
The following equivalences hold.
\begin{enumerate}
\item
Internal projectives are projective \iff \termt is projective.
\item
Projectives are internally projective \iff
projectives are closed under binary products.
\end{enumerate}
\end{lem}

In particular, there are exact categories with enough projectives
where internal projectives and projectives do not coincide.
In the topos of $G$-sets the terminal object is internally projective
(this is always the case in a category with enough projectives)
but not projective.
On the contrary, projectives in \GsetEMt are internally projective,
since binary products of free algebras are projective.
The following is an example of an exact category with enough projectives
whose projectives are not closed under binary products.

\begin{exmp}\label{exmp:projnoint}
Let $\catsetm^M$ be the topos of actions of a monoid $M$ on sets.
Being monadic over \catsett, considerations similar to the topos of $G$-sets apply.
In particular, we shall use the same notation.
One difference that makes this example slightly more involved is that
the square~\eqref{gset-covsq} in \cref{exmp:gset-pjcov}
is not necessarily a pullback.

If the monoid $M$ contains an idempotent element $i$
and no invertible elements,
then binary products of free algebras are not projective.
To show this, it is enough to check that the canonical $\pjK$-cover
\[
\xycenterm[C=4em@R=1ex]{
(M \times M \times M, \mu_{M \times M})	\xyrepi{}[r]^-{\mu \boxtimes \mu}
&	(M \times M, \mu \boxtimes \mu)	\\
(x_1,x_2,x_3)	\xymapsto{}[r]	&	(x_1 \cdot x_3, x_2 \cdot x_3)	}
\]
does not have a section in $\catsetm^M$,
where $\mu$ is another name for the monoid multiplication.
If a function $s = \pbkun{s_1,s_2,s_3} \colon M \times M \to M \times M \times M$
is a section of $\mu \boxtimes \mu$ in \catsett,
then $s(1,1) = (1,1,1)$, $s(1,i) = (1,i,1)$ and $s(i,1) = (i,1,1)$.
On the other hand, $s$ is a morphism of algebras \iff
$s(x_1 \cdot x, x_2 \cdot x) = (s_1(x_1,x_2),s_2(x_1,x_2),s_3(x_1,x_2)\cdot x)$.
It follows that, if $s$ were section of $\mu \boxtimes \mu$ in $\catsetm^M$,
then it would be $s(i,i) = s(1 \cdot i, 1 \cdot i) = (1,1,i)$
and $s(i,i) = s(1 \cdot i, i \cdot i) = (1,i,i)$.
\end{exmp}

We need a few more definitions before discussing Carboni and Rosolini's characterisation.
Let us say that an arrow $ f \colon V \to Y $ out of a weak product
$ Z \overset{p_1}{\longleftarrow} V \overset{p_2}{\longrightarrow} X $
is \emph{determined by projections}
if it preserves projections \wrt $ \id_Y,\id_Y $,
\ie if, for all parallel arrows $ h,k $ into $ V $,
$ p_1 h = p_1 k $ and $ p_2 h = p_2 k $ imply $ f h = f k $.
Mutatis mutandis, the same definition applies to arrows out of a weak pullback.

\begin{rem} \label{rem:detproj}
Let $\prjcm \colon \catcm \ftrmon \catem$ be a projective cover
of an exact category \catet and
$Z \toop V \to X$ a weak product in \catct.
By \cref{lem:prespro},
an arrow $ f \colon V \to Y $ is determined by projections in \catct
\iff $\prjcm f$ factors in \catet through the \pcover
$ \prjcm V \repi \prjcm Z \times \prjcm X $.
It follows that an arrow is determined by projections \iff
it preserves projections \wrt any pseudo equivalence relation on its codomain.
\end{rem}

Let \catct be a category with weak finite limits.
If \catct has binary products,
then for every weak product $Z \toop V \to X$
there is an idempotent $i \colon V \to V$ such that
$ p_1 i = p_1 $ and $ p_2 i = p_2 $ and
$i$ is determined by projections.
The converse does not hold in general,
but we do have the following.

\begin{lem} \label{lem:addass}
Let \catct be a category with weak finite limits.
The following are equivalent for objects $Z$ and $X$ in \catct.
\begin{enumerate}
\item
The product $\exfnctm Z \times \exfnctm X$ is projective in \excomct.
\item
For every weak product
$ Z \overset{p_1}{\longleftarrow} V \overset{p_2}{\longrightarrow} X $
in \catct there is an idempotent $ i \colon V \to V $
determined by projections and
such that $ p_1 i = p_1 $, $ p_2 i = p_2 $.
\end{enumerate}
\end{lem}

\begin{proof}
Let $Z$ and $X$ be objects in \catct.
Suppose that $ \exfnctm Z \times \exfnctm X $ is projective in \excomct,
and consider a weak product
$ Z \overset{p_1}{\longleftarrow} V \overset{p_2}{\longrightarrow} X $.
The \exfnctt-cover
$ \pbkun{\exfnctm p_1, \exfnctm p_2} \colon
\exfnctm V \repi \exfnctm Z \times \exfnctm X $
has then a section
$ [s,\bar{s}] \colon \exfnctm Z \times \exfnctm X \to \exfnctm V $.
By faithfulness of \exfnctt, the composite $sq$ is an idempotent on $V$
such that $p_1 sq = sq $ and $p_2 sq = p_2$, 
and $sq$ is determined by projections by \cref{rem:detproj}.
Conversely,
let $[\id_V,\rho_V] \colon \exfnctm V \repi \exfnctm Z \times \exfnctm X $
be the canonical cover of $ \exfnctm Z \times \exfnctm X $
and let $v_1,v_2 \colon \bar{V} \psrel V$ be its \exfnctt-kernel,
so that $Z \overset{p_1}{\longleftarrow} V \overset{p_2}{\longrightarrow} X$
is a weak product in \catct,
where $p_1,p_2$ are the arrows in \catct
giving rise to the projections of $\prjcm Z \times \prjcm X$,
and $ \exfnctm Z \times \exfnctm X \cong (v_1,v_2 \colon \bar{V} \psrel V)$
in \excomct.
Given $i \colon V \to V$ as in 2, %by \cref{rem:detproj},
there is $\bar{s} \colon \bar{V} \to V$ such that $ i v_1 = \bar{s} = i v_2$,
since $i$ is determined by projections,
and there is $h \colon V \to \bar{V}$ such that $v_1 h = i$ and $v_2 h = \id_V$,
since $p_1 i = p_1$ and $p_2 i = p_2$.
This amounts to say that $\bar{s}$ is a tracking for $i$
from $\prjcm Z \times \prjcm X$ to $\exfnctm V$
and that $[i,\bar{s}]$ in \excomct is a section of $[\id_V,\rho_V]$.
\end{proof}

\begin{prop} \label{prop:addass}
Let \catct be a category with weak finite limits.
The following are equivalent.
\begin{enumerate}
\item
\caucomct has binary products.
\item
For every weak product
$ Z \overset{p_1}{\longleftarrow} V \overset{p_2}{\longrightarrow} X $
in \catct there is an idempotent $ i \colon V \to V $
determined by projections and
such that $ p_1 i = p_1 $, $ p_2 i = p_2 $.
\end{enumerate}
\end{prop}

\begin{proof}
The equivalence follows from \cref{lem:addass,rem:caucom}.
\end{proof}

\begin{corol} \label{corol:undfn}
Let \catct be a category with weak finite limits
and suppose that \caucomct has binary products.
Then every arrow preserving projections
\wrt a pseudo equivalence relation $y_1,y_2$
is $\bar{Y}$-related to an arrow determined by projections.
\end{corol}

\begin{proof}
Let
$Z \overset{p_1}{\longleftarrow} V \overset{p_2}{\longrightarrow} X$
be a weak product,
$i \colon V \to V$ the idempotent from \cref{prop:addass}.2
and let $f \colon V \to Y$ preserve projections \wrt
a pseudo equivalence relation $y_1,y_2 \colon \bar{Y} \psrel Y$.
The arrow $f i \colon V \to Y$ is determined by projections as $i$ is.
Note that $i$ is $\bar{V}$-related to $\id_Y$,
where $\bar{V} \psrel V$ is a weak kernel pair of $p_1,p_2$,
as in the second part of the proof of \cref{lem:addass}.
It follows that the arrow $fi$ is $\bar{Y}$-related to $f$
since $f$ preserves projections.
\end{proof}

Note that the converse of \cref{corol:undfn} holds as well:
given its conclusion, an idempotent on a weak product $V$ is obtained as the arrow
$\bar{V}$-related to the identity on $V$,
where $\bar{V} \psrel V$ is the weak kernel pair of
the weak product projections.

\begin{defin}[\cite{CarboniRosolini2000}, 2.1]
A \emph{weak simple product} of a span
$ Z \overset{g_1}{\longleftarrow} Y \overset{g_2}{\longrightarrow} X $
in a category \catct with weak finite limits
is a commutative diagram as~\eqref{eq:espdiag}
where $ W \toop V \to X $ is a weak product and the arrow $ e \colon V \to Y $,
called \emph{weak evaluation}, is determined by projections,
such that, for every commutative diagram as~\eqref{eq:espunivd}
where $ W' \toop V' \to X $ is a weak product
and $ e' \colon V' \to Y $ is determined by projections,
there are arrows $ h \colon W' \to W $ and $ k \colon V' \to V $
making diagram \eqref{eq:espuniv} commutative.
\end{defin}

Clearly, weak simple products
are extensional simple products \wrt free pseudo equivalence relations,
\ie those of the form $ \id_Y,\id_Y $.
One can define a \emph{weak exponential} of $ Y $ and $ X $
to be an extensional exponential of $ Y $ and $ X $ \wrt $ \id_Y,\id_Y $.
It consists of an object $ W $ together with a weak product $ W \toop V \to X $
and an arrow $ V \to Y $ determined by projections,
which is weakly terminal among arrows $ V' \to Y $  determined by projections,
where $ W' \toop V' \to X $ is a weak product.
As it may be expected, in a category with weak finite limits,
weak exponentials can be constructed from weak simple products.
The relation with extensional simple products is clarified
by the following proposition together with \cref{rem:wesp},
where the notion of \emph{weakly extensional simple product}
is introduced and discussed.

\begin{prop} \label{prop:wsp2wesp}
Let \catct be a category with weak finite limits.
If \caucomct has binary products, then
\begin{center}
\begin{minipage}{.05\columnwidth}
$ (*) $
\end{minipage}
\begin{minipage}{.9\columnwidth}
for every span $ Z \overset{g_1}{\longleftarrow} Y \overset{g_2}{\longrightarrow} X $,
a weak simple product of $ g_1,g_2 $
is a weakly extensional simple product of $ g_1,g_2 $
\wrt any pseudo equivalence relation $ y_1,y_2 \colon \bar{Y} \psrel Y $
such that $ g_1 y_1 = g_1 y_2 $ and $ g_2 y_1 = g_2 y_2 $.
\end{minipage}
\end{center}
\medskip

Conversely, if \catct has weak simple products,
then $(*)$ implies that \caucomct has binary products.
\end{prop}

\begin{proof}
Suppose that \caucomct has binary products and
consider a weak simple product as in diagram~\eqref{eq:espdiag}.
The weak evaluation $e \colon V \to Y$ is determined by projections,
thus it preserves projections \wrt any pseudo equivalence relation.
Suppose a diagram as~\eqref{eq:espunivd} is given,
where $e' \colon V' \to Y$ preserves projections
\wrt a pseudo equivalence relation $y_1,y_2$ as in $(*)$.
By \cref{corol:undfn} there is $f \colon V' \to Y$ determined by projections,
$\bar{Y}$-related to $e'$ and, in addition,
such that the diagram~\eqref{eq:espunivd} commutes with $f$ in place of $e'$.
It follows that the weak evaluation $e$ is also a weakly extensional one.

Conversely,
let $ Z \overset{p_1}{\longleftarrow} V \overset{p_2}{\longrightarrow} X $
be a weak product span
and let $ v_1,v_2 \colon \bar{V} \psrel V $ be the weak kernel pair of $ p_1,p_2 $.
Take a weak simple product of $ p_1,p_2 $
with object $ w \colon W \to Z $, weak product
$W \overset{q_1}{\longleftarrow} U \overset{q_2}{\longrightarrow} X$,
and weak evaluation $ e \colon U \to V $.
By $(*)$ it is a weakly extensional simple product \wrt $ v_1,v_2 $ and,
since the identity on $ V $ preserves projections \wrt $ v_1,v_2 $,
there are arrows
$h \colon Z \to W$, $ k \colon V \to U $ and $j \colon V \to \bar{Y}$
such that $wh = \id_Z$,  $q_1 k = h p_1 $, $q_2 k = p_2$,
$v_1 j = ek$ and $v_2 j = \id_V$.
It follows that $p_1 e k = w q_1 k = p_1$, $p_2 ek = p_2$
and, since $e$ is determined by projections,
that the composite $ek$ is determined by projections
and it is an idempotent on $V$.
The conclusion follows from \cref{prop:addass}.
\end{proof}

Carboni and Rosolini's proof in~\cite{CarboniRosolini2000}
constructs an exponential in the exact completion
using weak exponentials and weak simple products instead of extensional ones.
Unfortunately two steps of the proof require an additional assumption.
These are Lemma 2.6, where a functor is claimed to be right adjoint,
and the last part of the proof of Theorem 2.5, where
it is claimed that, for a projective cover $\prjcm \colon \catcm \ftrmon \catem$,
an exponential $ B^A $ may be obtained as a quotient
of an equivalence relation on $ (\prjcm Y)^{\prjcm X} $,
where $ \prjcm Y \repi B $ and $ \prjcm X \repi A $ are \pcovers.
By \cref{lem:expintproj}, the latter claim holds \iff projectives are internally projective,
and it turns out that also the first claim is equivalent to projectives being internally projective.
This is not always the case when \catct has weak finite limits
as we saw, for instance, in \cref{exmp:projnoint}.

In order to illustrate the situation for Lemma 2.6 in~\cite{CarboniRosolini2000},
let us consider a projective cover
$\prjcm \colon \catcm \ftrmon \catem$ of an exact category \catet.
If \catct has weak simple products,
it is possible to define functors
\[
\wspfxm \colon \subm_{\catem}(Z \times X) \to \subm_{\catem}(Z)
\]
for any two objects $ Z, X $ in \catct as follows.
Given $ a = (a_1,a_2) \colon A \mono \prjcm Z \times \prjcm X $,
the subobject $ \wspfxm(a) \in \subm_{\catem}(Z) $ is the image factorisation
of the arrow $ w \colon W \to Z $ obtained as a weak simple product
of the span
$ Z \overset{g_1}{\longleftarrow} Y \overset{g_2}{\longrightarrow} X $,
where $ p \colon \prjcm Y \repi A $ is a \pcover and
$g_1$ and $g_2$ are the reflections in \catct of $a_1 p$ and $a_2 p$, respectively.
Note that the definition of $ \wspfxm(a) $ does not depend
(up to isomorphism) on the \pcover of $ A $.
The proof of Lemma 2.6 in~\cite{CarboniRosolini2000} claims that
\wspfxt is right adjoint to $ \prodfnct \prjcm X $.
However this is true \iff $ \prjcm X $ is internally projective in \catet.
Indeed, suppose for the moment that \catet is cartesian closed.
Then for every subobject $ a \colon A \mono \prjcm Z \times \prjcm X $
and \pcover $ \prjcm Y \repi A $,
the following commuting diagram
\begin{equation} \label{eq:intproj}
\xycenterm[C=3em]{
\Pi_{\prjcm X} \prjcm Y	\xyrepi{}[d] \xys{}[r]^-{\Pi_{\prjcm X}(q)}
&	\forall_X A	\xymonor{}[d]^-{\forall_{\prjcm X}(a)}
\\
\wspfxm A	\xymonor{}[r]_-{\wspfxm(a)}	&	\prjcm Z	}
\end{equation}
shows that \wspfxt coincides with $ \forall_{\prjcm X} $
\iff the top arrow is a regular epi.
Since this argument does not depend on the particular \pcover of $ A $,
we conclude that $ \wspfxm \cong \forall_{\prjcm X} $
\iff $ \prjcm X $ is internally projective.
More generally, the same result can be proven
only assuming the existence of weak simple products in \catct,
without using the cartesian closure of \catet.

\begin{prop} \label{prop:intprj}
Let $\prjcm \colon \catcm \ftrmon \catem$
be a projective cover of \catet exact
and suppose that \catct has weak simple products.
The following are equivalent.
\begin{enumerate}
\item
For every $ Z $ and $ X $ in \catct,
there is an adjunction
\[
\xycenterm[C=4em]{
\subm(\prjcm Z)
  \xys{}@<-.9ex>[r]_-{\prodfnct \prjcm X}
  \xyblank{|-{\adjrot{90}}}[r]
  \xys{}@<-1.1ex>[r];[]_-{\wspfxm}
&	\subm(\prjcm Z \times \prjcm X).}
\]
\item
For every $ X $ in \catct,
the object $ \prjcm X $ is internally projective in \catet.
\item
Projectives are internally projectives in \catet.
\end{enumerate}
\end{prop}

\begin{proof}
(1 $\Rightarrow$ 2)
We shall prove that for every weak product
$ Z \overset{p_1}{\longleftarrow} V \overset{p_2}{\longrightarrow} X $
there is an idempotent $i \colon V \to V$ such that
$p_1 i = p_1$, $p_2 i = p_2$
and $i$ is determined by projections.
The statement will follow from \cref{prop:addass},
and the equivalence between conditions 1 and 2 in \cref{rem:intprj}.
Let
$ p = (\prjcm p_1,\prjcm  p_2) \colon \prjcm U \repi \prjcm Z \times \prjcm X $
be a \pcover and consider the following pair of diagrams,
where the left-hand one is a weak simple product of $p_1,p_2$ in \catct
and the right-hand one is an image factorisation in \catet.
\[
\xycenterm[C=3em@R=1.5em]{
&	V	\xys{}[r]^-{v_1} \xys{}[dd]^-e \xys{}[ldd]_-{v_2}
&	W	\xys{}[dd]^-w
&	\prjcm W	\xyrepi{}[dr]^-q \xys{}[dd]_-{\prjcm w}
\\
&&&&	B	\xymonol{}[dl]^-{\wspfxm(\id_{Z \times X})}
\\
X	&	U	\xys{}[l]_(.4){p_2} \xys{}[r]^-{p_1}	&	Z
&	\prjcm Z	&}
\]
Since $ \prjcm Z $ is projective,
we can lift the unit
$ \id_{\prjcm Z} \leq \wspfxm(\id_{\prjcm Z} \times \prjcm X) $
to a section $h \colon Z \to W$ of $w$ in \catct.
It follows that there is $ k \colon U \to V $ in \catct
such that $ v_1 k = h p_1 $ and $ v_2 k = p_2 $.
The composite $ e k \colon U \to U $ is easily seen
to be idempotent, determined by projections
and such that $ p_1 e k = p_1 $ and $ p_2 e k = p_2 $.

(2 $\Rightarrow$ 3)
It follows from the fact that
objects in a fixed projective cover are internally projective
\iff all projectives are internally projective.

(3 $\Rightarrow$ 1)
Observe first that $ \wspfxm(a) \times \prjcm X \leq a $
always holds for any $ a \in \subm_{\catem}(\prjcm Z \times \prjcm X) $,
therefore we only need to show that $ b \leq \wspfxm(b \times \prjcm X) $
for every $ b \in \subm(\prjcm Z) $.
To this aim, let $ p \colon \prjcm U \repi B \times \prjcm X $ be a \pcover
and let $ w \colon W \to Z $ be a weak simple product of
$ Z \overset{g_1}{\longleftarrow} U \overset{g_2}{\longrightarrow} X $,
where $g_1$ and $g_2$ are the reflections in \catct of $b \proj_1 p$ and $\proj_2 p$,
so that $ \wspfxm(b \times \prjcm X) $ is an image factorisation of $ \prjcm w $.
Since $ \prjcm X $ is internally projective,
there are $ y \colon \prjcm Y \repi B $
and $ u \colon \prjcm Y \times \prjcm X \to \prjcm U $
such that $ p u = y \times \prjcm X $.
The composition of any \pcover $ \prjcm V \repi \prjcm Y \times \prjcm X $
with $ u $ is an arrow $ V \to U $ determined by projections by \cref{rem:detproj}.
It follows that the weak universal property of weak simple products
provides an arrow $ Y \to W $ over $ Z $ which,
in turn, induces an arrow $ b \to \wspfxm(b \times \prjcm X) $ as required.
\end{proof}

\Cref{prop:intprj} proves that
one of the additional assumptions listed in the remark below
must be added to the statement of Lemma 2.6 in~\cite{CarboniRosolini2000}
in order for its proof to go through.

\begin{rem} \label{rem:addass}
Let $\prjcm \colon \catcm \ftrmon \catem $ be a projective cover of \catet exact
and recall that \caucomct denotes the splitting of idempotents of \catct
and that $\allprje$ denotes the full subcategory of \catet on the projective objects.
The following are equivalent
by \cref{prop:addass,lem:proj&prd}.
\begin{enumerate}
\item
Projectives in \catet are internally projective.
\item
$\allprje \equiv \caucomcm$ has binary products.
\item
For every weak product
$ Z \overset{p_1}{\longleftarrow} V \overset{p_2}{\longrightarrow} X $ in \catct
there is an idempotent $ i \colon V \to V $
which is determined by projections and such that
$ p_1 i = p_1 $ and $ p_2 i = p_2 $.
\end{enumerate}
\end{rem}

The following is Theorem 2.5 in~\cite{CarboniRosolini2000}
with the required assumption.
Note that it can be seen as a direct consequence of
\cref{prop:wsp2wesp} and \cref{thm:ccex}.

\begin{theor}[\cite{CarboniRosolini2000}, 2.5] \label{thm:caro}
Let $\prjcm \colon \catcm \ftrmon \catem $ be a projective cover of \catet exact.
If one of the equivalent conditions from \cref{rem:addass} holds,
then \catet is cartesian closed \iff \catct has weak simple products.
\end{theor}

\begin{corol} \label{corol:all}
Let $\prjcm \colon \catcm \ftrmon \catem $ be a projective cover of \catet exact
and suppose that one of the conditions from \cref{rem:addass} holds.
Then the following are equivalent.
\begin{enumerate}
\item
\catct has weak simple products.
\item
\catct has extensional simple products.
\item
\catet is cartesian closed.
\end{enumerate}
\end{corol}

The case of local cartesian closure presents, at this point, no surprise.
The conditions in \cref{rem:addass} take the following form.

\begin{rem} \label{rem:addass/}
Let $\prjcm \colon \catcm \ftrmon \catem $ be a projective cover of \catet exact.
The following are equivalent.
\begin{enumerate}
\item
For every $ U $ in \catct,
projectives in $ \catem/\prjcm U $ are internally projective.
\item
$\allprje \equiv \caucomcm$ has pullbacks.
\item
For every weak pullback square in \catct
\[
\xycenterm{
V	\xys{}[d]_{p_1} \xys{}[r]^{p_2}	&	X	\xys{}[d]
\\
Z	\xys{}[r]	&	U	}
\]
there is an idempotent $ i \colon V \to V $ which is determined by projections
and such that $ p_1 i = p_1 $ and $ p_2 i = p_2 $.
\end{enumerate}
\end{rem}

We can define a weak dependent product as an extensional dependent product
\wrt free pseudo equivalence relations,
and a weakly extensional dependent product
similarly to the simple version in \cref{rem:wesp}.
The following is proved as \cref{prop:wsp2wesp}.

\begin{prop} \label{prop:wdp2wedp}
Let \catct be a category with weak finite limits.
If \caucomct has pullbacks, then
\begin{center}
\begin{minipage}{.05\columnwidth}
$ (*) $
\end{minipage}
\begin{minipage}{.9\columnwidth}
for every pair
$ Y \overset{g}{\longrightarrow} X \overset{f}{\longrightarrow} Z $,
a weak dependent product of $ f,g $
is a weakly extensional dependent product of $ f,g $
\wrt any pseudo equivalence relation $ y_1,y_2 \colon \bar{Y} \psrel Y $
such that $ g y_1 = g y_2 $.
\end{minipage}
\end{center}
\medskip

Conversely, if \catct has weak dependent products,
then $(*)$ implies that \caucomct has pullbacks.
\end{prop}

Theorem 3.3 in~\cite{CarboniRosolini2000} is then reformulated as follows,
and it can be derived from \cref{prop:wdp2wedp} and \cref{thm:lccex}.

\begin{theor}[\cite{CarboniRosolini2000}, 3.3] \label{thm:caroloc}
Let $\prjcm \colon \catcm \ftrmon \catem $ be a projective cover of \catet exact.
If one of the conditions from \cref{rem:addass/} holds,
then \catet is locally cartesian closed \iff \catct has weak dependent products.
\end{theor}

\begin{corol}
Let $\prjcm \colon \catcm \ftrmon \catem $ be a projective cover of \catet exact
and suppose that one of the conditions from \cref{rem:addass/} holds.
Then the following are equivalent.
\begin{enumerate}
\item
\catct has weak dependent products.
\item
\catct has extensional dependent products.
\item
\catet is locally cartesian closed.
\end{enumerate}
\end{corol}

The equivalent conditions in \cref{rem:addass,rem:addass/} are satisfied, in particular,
whenever \catct has finite limits.
It follows that an ex/lex completion of \catct is (locally) cartesian closed \iff
\catct has weak simple (resp.\ dependent) products.

\section{Conclusions}\label{sec:concl}

It appears from the above discussion
that the ultimate reason for the failure of
Carboni and Rosolini's characterisation
in the case of a category \catct with weak finite limits
is related to the absence of certain idempotents on weak products or,
equivalently, to the fact that there are arrows preserving projections
not related to an arrow determined by projections.
It is then clear why their characterisation holds when \catct has finite limits,
as in this case the required idempotent on a weak product not only exists
but it even splits.
Carboni and Rosolini's characterisation then still improves
on the non-elementary one presented in~\cite{Rosicky1999}
which requires \catct to be infinitary lextensive,
and it does fulfil their main motivation,
that is, providing a common general reason for the local cartesian closure of
the effective topos and the category of equilogical spaces.

More generally,
as the exact completion \excomct is determined
by the splitting of idempotents \caucomct
rather than by \catct,
it is no surprise that Carboni and Rosolini's characterisation is still valid
when it is \caucomct, rather than \catct, to have limits.
This is the case for the topos of $G$-sets,
where the Kleisli category lacks binary products,
but nevertheless exponentials in \GsetEMt can be determined
by weak simple products in \GsetKlt.
We mentioned an instance where this is not possible in \cref{exmp:projnoint},
but we originally noticed the gap in Carboni and Rosolini's proof
when trying to apply the results in~\cite{CarboniRosolini2000}
to a category of types arising in \mltt,
see~\cite{EmmeneggerPalmgren2017}.
It is provable in \mltt that
this category has finite limits \iff
the so-called principle of Uniqueness of Identity Proofs
holds for all of its objects.
But this principle is independent of the theory,
and it is not known whether the equivalent conditions in
\cref{rem:addass/} are equivalent to it or strictly weaker.
In particular, it is not known whether
the splitting of idempotents
of this category of types has pullbacks.

Together with Erik Palmgren, we presented in~\cite{EmmeneggerPalmgren2017}
a different condition on \catct to derive the local cartesian closure of \excomct,
inspired by P.\ Aczel's Fullness Axiom
from Constructive Set Theory~\cite{Aczel1978},
and we applied it to show when an exact completion produces a model of the
Constructive Elementary Theory of the Category of Sets~\cite{Palmgren2012},
a constructive version of Lawvere's ETCS~\cite{Lawvere1965}.
This ``fullness condition'' is rather natural
from a type theoretic point of view and,
as we argue in~\cite{Emmenegger2018}, from a homotopy theoretic one
as it naturally arises, under mild assumptions, in several homotopy categories
including the homotopy categories of spaces and CW-complexes.
However, it is likely to be stronger than local cartesian closure of \excomct.
\Cref{thm:lccex} can then be used together
with results from~\cite{EmmeneggerPalmgren2017} to obtain
a complete characterisation of models of CETCS
in terms of properties of their choice objects.

%------------------------------------------------------------------------------------------------------------------------------------------
\section*{Acknowledgements}
I am grateful to Peter LeFanu Lumsdaine for a fruitful discussion
in an early stage of the research.
I would also like to express my gratitude to Pino Rosolini
for his support and interest in this work after I informed him of my results.
Parts of this research
were presented during 2017
at the 5th CMNAA Workshop in Louvain-la-Neuve,
at the CT Conference in Vancouver
and at the XXVI AILA Meeting in Padua.
A complete presentation was given at the first ItaCa Workshop,
which took place in Milan in December 2019.
I gratefully thank the organisers of the four events
for giving me the opportunity to speak.
Finally, I am especially grateful to the anonymous referee
for extremely helpful and illuminating comments on
previous versions of the presentation.

%------------------------------------------------------------------------------------------------------------------------------------------

\end{document}